\newtheorem{theorem}{Theorem}[section]
\newtheorem{definition}{Definition}
\newtheorem{lemma}[theorem]{Lemma}
\newtheorem{proposition}[theorem]{Proposition}
\newtheorem{remark}{Remark}
\DeclareMathOperator{\divergenceOperator}{div}
\DeclareMathOperator{\cycl}{Cycl}
\numberwithin{equation}{section}
\numberwithin{figure}{section}
\theoremstyle{remark}
\theoremstyle{plain}
\begin{document}

\title{\bf {Cyclicity of sliding cycles with singularities of regularized piecewise smooth visible-invisible two-folds\thanks{Research of J.H. was partially supported by the National Natural Science Foundation of China (No. 12471162, 12231008).  Research of O.H.P. is supported by Sao Paulo Research Foundation (FAPESP) grants 2021/10198-9 and 2024/00392-0. Research of J.Y. was partially supported by the China Scholarship Council program (No.202306770071).
Email address: hjc@ccnu.edu.cn (J. Huang), renato.huzak@uhasselt.be (R. Huzak), otavio.perez@icmc.usp.br (O.H. Perez), 2662219361@qq.com (J. Yao).}}}

 \author {{\sc  Jicai Huang$^{a}$, Renato Huzak$^{b}$\footnote{Corresponding author}, Otavio Henrique Perez$^{b,c}$, Jinhui Yao$^{a,b}$}\\
{\it\small $^{a}$  School of Mathematics and Statistics, Central China Normal University,} \\
{\it\small  Wuhan, Hubei 430079, P. R. China}\\
{\it\small  $^{b}$ Hasselt University, Campus Diepenbeek, Agoralaan Gebouw D, 3590 Diepenbeek, Belgium}\\
{\it\small  $^{c}$ Universidade de S\~{a}o Paulo (USP),}
{\it\small Instituto de Ci\^{e}ncias Matem\'aticas e de Computa\c{c}\~{a}o (ICMC).}\\
 {\it\small Avenida Trabalhador S\~{a}o Carlense, 400, CEP 13566-590, S\~{a}o Carlos, S\~{a}o Paulo, Brazil}
}

\date{} \maketitle \normalsize
\maketitle{ {\bf Abstract}. In this paper we study the cyclicity of sliding cycles for regularized piecewise smooth visible-invisible two-folds, in the presence of singularities of the Filippov sliding vector field located away from two-folds. We obtain a slow-fast system after cylindrical blow-up and use a well-known connection between the divergence integral along orbits and transition maps for vector fields. Since properties of the divergence integral depend on the location and multiplicity of singularities, we divide the sliding cycles into different classes, which can then produce different types of cyclicity results. As an example, we apply our results to regularized piecewise linear systems.
\vspace{0.3cm}

\noindent {\em Key words:}~ Sliding cycle, cyclicity, piecewise smooth system, difference map, two-folds.

\section{Introduction}\label{section-intro}
In this paper we consider a family of regularized piecewise smooth (PWS) systems
\begin{equation}\label{PWS}
\dot{z}=Z_{\lambda,c}^{+}(z)\phi(y\epsilon^{-1})+Z^{-}_{\lambda,c}(z)(1-\phi(y\epsilon^{-1})), \quad z:=(x,y)\in \mathbb{R}^{2},
\end{equation}
where $Z_{\lambda,c}^{+}=(X_{\lambda,c}^{+},Y_{\lambda,c}^{+})$ and $Z_{\lambda,c}^{-}=(X_{\lambda,c}^{-},Y_{\lambda,c}^{-})$ are $C^{\infty}$-smooth (shortly, smooth) vector fields, $0<\epsilon\ll 1$, $\lambda\in\mathbb{R}$ is a small parameter and $c$ is kept in a small neighborhood of $c_0\in \mathbb{R}^{l}$. Throughout this paper, we adopt the notation $\lambda\sim 0$ and $c\sim c_0$. The role of $\lambda$ will be explained later and $c$ is included here for the sake of generality.  Moreover, $\phi: \mathbb{R}\rightarrow \mathbb{R}$ is a smooth regularization function satisfying
\begin{equation}\label{regular}
\phi'(s)>0 \quad \text{for} \ \text{all} \ s\in\mathbb{R}, \quad \lim\limits_{s\rightarrow \infty}\phi(s)=1, \quad \lim\limits_{s\rightarrow -\infty}\phi(s)=0.
\end{equation}

System \eqref{PWS} has attracted much attention in the past few years
(see e.g. \cite{HK23,HK24,HKR,JKW21a,JKW21b,KH15,K20} and references therein). Before explaining the importance of \eqref{PWS} in more detail, we first recall some basic definitions in the PWS theory \cite{dBCK08,Filippov,GSTJDE}.

If we let $\epsilon\rightarrow 0$, then system \eqref{PWS} can be written as

\begin{equation}
\dot{z}=\left\{
\begin{array}{ll}
	Z_{\lambda,c}^{+}(z), \quad \text{if}  \quad y>0,\\[3mm]
	Z_{\lambda,c}^{-}(z), \quad \text{if} \quad  y<0,
	\end{array}
\right.
	\label{PWS-1}
	\end{equation}
with the switching manifold being $\Sigma=\{z\in\mathbb R^2 | h(z)=0\}$, where $h(x,y) = y$. System \eqref{PWS} is a regularization of the piecewise smooth vector field \eqref{PWS-1}.

The Lie-derivative of the function $h$ with respect to the vector field $Z_{\lambda,c}^\pm$ at $z$ is given by $Z_{\lambda,c}^\pm(h)(z):=\nabla h(z)\cdot Z_{\lambda,c}^\pm(z)$. Straightforward computations lead to $Z_{\lambda,c}^\pm(h)(z)= Y_{\lambda,c}^\pm(z)$. This allows us to split $\Sigma$ into three different subsets, which we will denote as $\Sigma_{cr}(\lambda,c)$, $\Sigma_{sl}(\lambda,c)$ and $\Sigma_{T}(\lambda,c)$, depending on $\lambda$ and $c$.

The subset $\Sigma_{cr}(\lambda,c)\subset \Sigma$ is called {\it crossing set}, and it consists of all points $q=(x,0)$ satisfying
$$Z_{\lambda,c}^+(h)(q)Z_{\lambda,c}^-(h)(q)=Y_{\lambda,c}^+(q)Y_{\lambda,c}^-(q)>0.$$
%$$Y_{\lambda}^{+}(q,c)Y_{\lambda}^{-}(q,c)>0,$$
At each point $q\in\Sigma_{cr}(\lambda,c)$ the orbit of PWS system \eqref{PWS-1} is the concatenation of the orbit of $Z_{\lambda,c}^+$ and the orbit of $Z_{\lambda,c}^-$ through $q$, crossing the switching line $\Sigma$.

The subset $\Sigma_{sl}(\lambda,c)\subset \Sigma$ is called the {\it sliding set}, and it consists of all points $q=(x,0)$ satisfying
$$Z_{\lambda,c}^+(h)(q)Z_{\lambda,c}^-(h)(q)=Y_{\lambda,c}^+(q)Y_{\lambda,c}^-(q)<0.$$
%$$Y_{\lambda}^{+}(q,c)Y_{\lambda}^{-}(q,c)<0,$$
Moreover, we say that $q\in\Sigma_{sl}(\lambda,c)$ is a {\it stable} (resp. {\it unstable}) sliding point if $Y_{\lambda,c}^+(q)<0$ and $Y_{\lambda,c}^-(q)>0$ (resp. $Y_{\lambda,c}^+(q)>0$ and $Y_{\lambda,c}^-(q)<0$). Near stable sliding points (resp. unstable sliding points) the vector fields $Z_{\lambda,c}^\pm$ point toward (resp. away from) the switching line $\Sigma$.

Following \cite{Filippov}, one can define a vector field along the sliding set $\Sigma_{sl}(\lambda,c)$ as
\begin{align}\label{slvf}
    Z_{\lambda,c}^{sl}(x)&=\frac{1}{(Z_{\lambda,c}^+-Z_{\lambda,c}^-)(h)}\left(Z_{\lambda,c}^+(h)Z_{\lambda,c}^--Z_{\lambda,c}^-(h)Z_{\lambda,c}^+\right)(x,0)\nonumber\\
    &=\left(\frac{\det{Z_{\lambda,c}}(x)}{(Y_{\lambda,c}^{+}-Y_{\lambda,c}^{-})(x,0)},0\right),
\end{align}
where
\begin{equation}\label{slvf-1}
\det{Z_{\lambda,c}}(x):=X_{\lambda,c}^{-}(x,0)Y_{\lambda,c}^{+}(x,0)-X_{\lambda,c}^{+}(x,0)Y_{\lambda,c}^{-}(x,0).
\end{equation}

Observe that for each $(x,0)\in \Sigma_{sl}(\lambda,c)$, $Z_{\lambda,c}^{sl}(x)$ is defined as the convex combination of the two vectors $Z_{\lambda,c}^\pm(x,0)$, tangent to $\Sigma_{sl}(\lambda,c)$ at $(x,0)$. We denote by $X_{\lambda,c}^{sl}(x)$ the first component in \eqref{slvf} and call this one-dimensional vector field the {\it (Filippov) sliding vector field}. If $X_{\lambda,c}^{sl}(x)=0$ for some $(x,0)\in \Sigma_{sl}(\lambda,c)$, then we say that $x$ is a singularity (or zero) of $X_{\lambda,c}^{sl}$. From \eqref{slvf-1} it follows that $x$ is a singularity of $X_{\lambda,c}^{sl}$ if, and only if, $Z_{\lambda,c}^+(x,0)$ and $Z_{\lambda,c}^-(x,0)$ are linearly dependent.

The subset $\Sigma_{T}(\lambda,c)\subset \Sigma$ consists of all points $q=(x,0)$ where $Y_{\lambda,c}^{+}(q)=0$ or $Y_{\lambda,c}^{-}(q)=0$. In this case, $q$ is called a tangency point. We say that $q\in\Sigma_{T}(\lambda,c)$ is a fold point from ``above'' if the orbit of $Z_{\lambda,c}^+$ through $q$ has a quadratic tangency with $\Sigma$ at $q$, that is,
$$Z_{\lambda,c}^+(h)(q)=0 \quad \text{and} \quad (Z_{\lambda,c}^+)^2(h)(q)\neq 0,$$
or, equivalently,
$$Y_{\lambda,c}^{+}(q)=0, \quad X_{\lambda,c}^{+}(q)\neq 0 \quad \text{and} \quad \partial_{x}Y_{\lambda,c}^{+}(q)\neq 0.$$

Notice that we can write the second order Lie-derivative as $(Z_{\lambda,c}^+)^2(h)(q)=X_{\lambda,c}^{+}(q)\partial_{x}Y_{\lambda,c}^{+}(q)$ for $Z_{\lambda,c}^+(h)(q)=0$.
Similarly, we can define a fold point from ``below'' in terms of $Z_{\lambda,c}^-$. A fold point $q$ from above (resp. from below) is visible if $(Z_{\lambda,c}^+)^2(h)(q)> 0$ (resp. $(Z_{\lambda,c}^-)^2(h)(q)<0$) and invisible if $(Z_{\lambda,c}^+)^2(h)(q)< 0$ (resp. $(Z_{\lambda,c}^-)^2(h)(q)> 0$).
%and is called {\it a fold from ``below''} if
%$$Y_{\lambda}^{-}(x,0,c)=0, \quad X_{\lambda}^{-}(x,0,c)\neq 0 \quad \text{and} \quad \partial_{x}Y_{\lambda}^{-}(x,0,c)\neq 0.$$

We say that $q\in\Sigma_{T}(\lambda,c)$ is a {\it two-fold singularity} if $q$ is a fold point from above and from below. A two-fold singularity $q$ can be visible-visible ($q$ is visible from above and from below), visible-invisible ($q$ is visible from above and invisible from below or visible from below and invisible from above) and invisible-invisible ($q$ is invisible from above and from below). See Figure \ref{fig-2fold}. It is possible to further classify two-fold singularities taking into account the direction of the flow of $Z_{\lambda,c}^\pm$ and $X_{\lambda,c}^{sl}$ defined in \eqref{slvf}. For more details, see \cite{BLM18, GSTJDE, KH15, KRG03}.

\begin{figure}[htp]
\begin{center}
\includegraphics[width=5in]{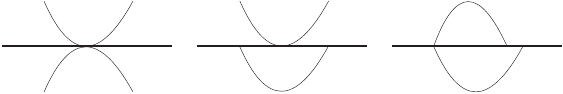}
\caption{\footnotesize{Two-fold singularities. From the left to the right: visible-visible, visible-invisible and invisible-invisible. Each case has sub cases that depend on the direction of the flow of $Z_{\lambda,c}^\pm$ and $X_{\lambda,c}^{sl}$.}}\label{fig-2fold}
\end{center}
\end{figure}

The main purpose of this paper is to study limit cycles of \eqref{PWS}, for $0<\epsilon\ll1$, $\lambda\sim 0$ and $c\sim c_0$, when the PWS system \eqref{PWS-1} satisfies the following conditions for $\lambda=0$:
\begin{enumerate}
    \item[(A1)] The origin $q=(0,0)$ is a visible-invisible two-fold singularity for each $c\sim c_0$ ($q$ is visible from above and invisible from below), with stable sliding for $x < 0$ and unstable sliding for $x > 0$. See Figure \ref{fig:PWS-Smooth}(a).
    \item [(A2)] The sliding vector field $X_{0,c_0}^{sl}$ is positive in a compact segment $[\eta_{-},\eta_{+}]\subset\Sigma$ ($\eta_{-}<0<\eta_{+}$), except at a finite number of singularities of $X_{0,c_0}^{sl}$, located in $[\eta_{-},\eta_{+}]\setminus\{0\}$ and with finite multiplicity. Moreover, we suppose that each singularity located in the interior $(\eta_{-},\eta_{+})$ has even multiplicity.
    \item [(A3)] The (regular) orbit of $Z_{0,c_0}^-$ through the point $(\eta_{+},0)\in \Sigma\cap \{x>0\}$ hits the section $\Sigma\cap \{x<0\}$ in the point $(\eta_{-},0)$ for positive time. See Figure \ref{fig:PWS-Smooth}(a).
    \item [(A4)] The regularization function $\phi$ satisfies \eqref{regular} and $\phi$ is smooth at $\pm\infty$ in the sense that
\begin{equation}\label{smooth}
\phi_{+}(s)=\left\{
\begin{array}{ll}
1  \quad \qquad&\text{for} \quad s=0,\\[3mm]
\phi(s^{-1}) \quad &\text{for} \quad s>0,
\end{array}
\right.\  ;
\quad  \phi_{-}(s)=\left\{
\begin{array}{ll}
\phi(-s^{-1}) \quad \qquad&\text{for} \quad s>0,\\[3mm]
0  \quad &\text{for} \quad s=0,
\end{array}
\right.
\end{equation}
are smooth functions at $s = 0$.
\end{enumerate}

In the rest of this paper we assume that (A1)--(A4) are satisfied. In Section \ref{subsection-assump}, assumptions (A1) and (A2) will be expressed in terms of $X_{0,c}^{\pm}$, $Y_{0,c}^{\pm}$, $\partial_{x}Y_{0,c}^{\pm}$ and $\det{Z_{0,c}}$. In assumption (A2) we require that the sliding vector field can be regularly extended through $x=0$. Assumption (A4) implies that for $z$ kept in a compact set away from the switching manifold $\{y=0\}$, \eqref{PWS} is an $O(\epsilon)$-perturbation of \eqref{PWS-1} where $O(\epsilon)$ is a smooth function in $(z,\epsilon,\lambda,c)$ and goes to zero as $\epsilon\to 0$, uniformly in $(z,\lambda,c)$.

Throughout this paper the points $(\eta_{\pm}, 0)\in\Sigma$ will be called corner points.

We perform the following rescaling in the parameter space $(\epsilon,\lambda)$:
$$(\epsilon,\lambda)=(\tilde \epsilon^2,\tilde\epsilon \tilde{\lambda}),$$
where $0<\tilde \epsilon\ll1$ and $\tilde\lambda\sim 0$. This rescaling will be convenient later when we introduce cylindrical blow-ups and apply \cite{DD08} (see Section \ref{sec3} and Appendix \ref{appendix-extra-blowup}). In particular, $\tilde{\lambda}=0$ implies the existence of a connection $\gamma$ between the attracting and repelling portions of the curve of singularities $\overline C$ in Figure \ref{FIG6} in Appendix \ref{appendix-extra-blowup}. The same scaling has been used in \cite{HK23,HK24}. Now, system \eqref{PWS} can be written as
\begin{equation}\label{PWSRescaled}
\dot{z}=Z_{\epsilon\tilde\lambda,c}^{+}(z)\phi(y\epsilon^{-2})+Z^{-}_{\epsilon\tilde\lambda,c}(z)(1-\phi(y\epsilon^{-2})),
\end{equation}
where we denote $\tilde\epsilon$ again by $\epsilon$ for simplicity of the notation.

Denote by $\Gamma$ the closed curve consisting of the segment $[\eta_{-},\eta_{+}]$ and the orbit of $Z_{0,c_0}^-$ connecting the point $(\eta_{+},0)$ with the point $(\eta_{-},0)$ (see assumptions (A2) and (A3) and Figure \ref{fig:PWS-Smooth}(a)). We call $\Gamma$ a sliding cycle. \textit{The goal of this paper is to provide upper bounds for the number of limit cycles produced by $\Gamma$ inside the family \eqref{PWSRescaled}, with $0<\epsilon\ll1$, $\tilde \lambda\sim 0$ and $c\sim c_0$.}

\begin{figure}[htb]
	\begin{center}		\includegraphics[width=13.8cm,height=3cm]{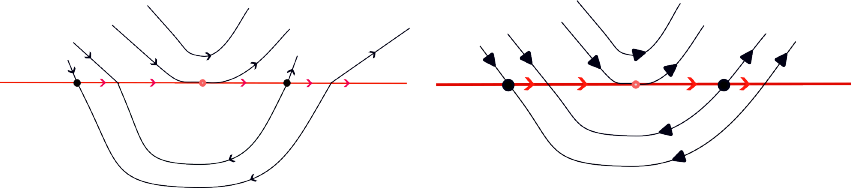}
		{\footnotesize
   \put(-304,-11){\footnotesize{(a)}}
   \put(-365,38){\footnotesize{$\eta_{-}$}}
   \put(-241,38){\footnotesize{$\eta_{+}$}}
   \put(-255,12){\footnotesize{$\Gamma$}}
   \put(-110,-11){\footnotesize{(b)}   \put(33,22){\footnotesize{$\widetilde\Gamma$}
   }}}
         \end{center}
	\caption{\footnotesize{(a) A PWS system with a visible-invisible two-fold singularity at $(x,y)=(0,0)$, and the sliding cycle $\Gamma$. Sliding vector field $X_{0,c_0}^{sl}$ has two singularities in $[\eta_{-},\eta_{+}]$: one corner singularity $x=\eta_-$ and one singularity in the interior $(\eta_{-},\eta_{+})$. (b) Canard cycle $\widetilde\Gamma$ in a smooth slow-fast system. The slow dynamics contains singularities.}}
	\label{fig:PWS-Smooth}
\end{figure}

Let us now say more about the interest in systems of the form \eqref{PWSRescaled}. Firstly, since $\phi'>0$, observe that the dynamics of the regularization \eqref{PWSRescaled} of \eqref{PWS-1} near sliding subsets of $\Sigma$ is given by the sliding vector field $X_{0,c}^{sl}$ as $\epsilon\to 0$. More precisely, by using directional blow-up $y=\epsilon^2\widetilde y$, it can be shown that the regularized system \eqref{PWSRescaled} is a slow-fast system whose associated slow dynamics \cite[Chapter 3]{DDR21} is equivalent to the flow of the sliding vector field $X_{0,c}^{sl}$ (see \cite{PanSil}). This fact will play a crucial role throughout our paper. For more details, see Proposition \ref{prop-slow-sliding} in Section \ref{sec3} and \cite[Theorem 2.2]{HK23}.

Furthermore, in \cite{HK23,HK24} it has been observed that the sliding cycle $\Gamma$ as defined above is similar to canard cycles in planar slow-fast systems (see $\widetilde\Gamma$ in Figure \ref{fig:PWS-Smooth}(b)). The canard cycle $\widetilde\Gamma$ contains both normally attracting and normally repelling portions of the line of singularities and a slow–fast Hopf/canard point \cite[Section 2.4]{DDR21} between them. The line of singularities has a quadratic contact with the fast fibers at the slow-fast Hopf point, and the associated slow dynamics goes from the attracting portion to the repelling one with nonzero speed. In addition, the slow dynamics may have singularities located away from the slow-fast Hopf point. It is now clear that our visible-invisible two-fold singularity as described in assumption (A1) will play the role of a canard point in the analysis of limit cycles of \eqref{PWSRescaled}. Notice also that by (A2) the sliding vector field is regularly extended through the origin $x=0$, allowing passage from the stable sliding region to the unstable one with nonzero speed (see \cite{HK23}).

In \cite{HK23}, authors assumed that $X_{0,c_0}^{sl}$ is regular ($X_{0,c_0}^{sl}>0$ on $[\eta_{-},\eta_{+}]$) and studied the existence and number of limit cycles of \eqref{PWSRescaled} produced by $\Gamma$, using the notion of slow divergence integral (see Section \ref{sec2}). The slow divergence integral is important because its zeros provide candidates for limit cycles (see e.g. \cite[Theorem 3.1]{HK23}). We would like to emphasize that the slow divergence integral was first developed by De Maesschalck, Dumortier and Roussarie \cite[Chapter 5]{DDR21}, in the context of slow–fast systems and canard theory. The integral is a key tool in the study of limit cycles Hausdorff close to canard cycles of the type $\widetilde\Gamma$ sketched in Figure \ref{fig:PWS-Smooth}(b) when the slow dynamics is regular. In \cite[Section 1]{HK24}, one can find a planar slow-fast system, with the phase portrait given in Figure \ref{fig:PWS-Smooth}(b), and see how to compute the slow divergence integral for that specific system.

This paper is a natural continuation of \cite{HK23}. In \cite{HK23}, the compact segment $[\eta_{-},\eta_{+}]\subset\Sigma$ does not contain singularities of $X^{sl}_{0,c_{0}}$ away from the two-fold. However, in the present study, the segment $[\eta_{-},\eta_{+}]\subset\Sigma$ may contain singularities of the sliding vector field $X_{0,c_0}^{sl}$ located away from the two-fold singularity, see assumption (A2) and Figure \ref{FIG3}. The main motivation to study this very general model comes from \cite{DD08} where the authors deal with limit cycles produced by canard cycles in the presence of singularities of the slow dynamics (see Figure \ref{fig:PWS-Smooth}(b)). We will prove similar cyclicity results but for regularized PWS systems.  The idea is that we first blow up the switching manifold $\Sigma$ (we use a cylindrical blow-up) and bring system \eqref{PWSRescaled} into the framework of \cite{DD08}, near a curve of singularities located on the top of the cylinder. Then we can study the divergence integral along orbits for the blown-up vector field (near the curve of singularities) by using results of \cite{DD08}. This will be the leading order part in the analysis of the divergence integral along $\Gamma$. For more details we refer to Section \ref{sec3}. We point out that in our setting the slow divergence integral \cite{HK23} is not necessarily well-defined due to the presence of the singularities (see Section \ref{sec2} and Section \ref{sec3}).

In \cite{HK24}, one also detects sliding cycles containing a hyperbolic saddle of $Z_{0,c_0}^-$ located in $\{y<0\}$, in such a way that (A3) is violated. The cyclicity of such sliding cycles is a topic of further study. We point out that the cyclicity of canard cycles with hyperbolic saddles located away from curves of singularities has been treated in \cite{H22}. We strongly believe that similar techniques can be used in the PWS setting.

It is important to remark that the regularization process as in systems \eqref{PWS} and \eqref{PWSRescaled} was called linear regularization in \cite{NoJef}. A well-known linear regularization is the so-called {\it Sotomayor--Teixeira regularization} \cite{ST96}. However, different phenomena can arise when considering other regularization processes \cite{LliSilTei06, PanSil,PerRS}. Results concerning the cyclicity of $\Gamma$ with other regularization processes is a topic for future study.

This paper is structured as follows. In Section \ref{sec2} we state our main results about upper bounds for the number of limit cycles of \eqref{PWSRescaled} close to $\Gamma$ in Hausdorff sense. We divide the sliding cycles into different cases, each one depending on the location of singularities. Each case generates different types of cyclicity results. As explained in Section \ref{sec2}, the segment $[\eta_{-},\eta_{+}]$ can contain (a) no singularities of $X_{0,c_0}^{sl}$, (b) singularities of $X_{0,c_0}^{sl}$ located only in the interior $(\eta_{-},\eta_{+})$, (c) a singularity of $X_{0,c_0}^{sl}$ in one of the corner points ($\eta_{-}$ or $\eta_{+}$), (d) singularities of $X_{0,c_0}^{sl}$ in both boundary points, etc. In assumption (A2) we require that singularities located in $(\eta_{-},\eta_{+})$ have even multiplicity because the passage from the left to the right is possible only on one side of singularities of odd multiplicity (thus, $\Gamma$ cannot produce limit cycles if $(\eta_{-},\eta_{+})$ contains a singularity of odd multiplicity). Singularities of odd multiplicity may be located only at the corner points $\eta_{-}$ and $\eta_{+}$. The main results are proven in Section \ref{sec3}.

Finally, in Section \ref{sec4} we apply our results to regularized piecewise linear visible-invisible two-fold singularities \cite{HK24}. In that piecewise linear model, the sliding vector field can have at most $1$ zero counting multiplicity. When such zero exists, Statements 1 and 2 of Theorem \ref{Thm-2} in Section \ref{sec2} will imply that every sliding cycle that satisfies (A1)-(A4) and contains the zero as its corner point can generate at most $1$ limit cycle (for more details we refer the reader to Section \ref{sec4}). Following \cite[Remark 3]{HK24}, there exists a regularized piecewise linear system with $2$ limit cycles produced by sliding cycles not containing the zero of the sliding vector field.

The sliding cycle $\Gamma$ is located in the half-plane
$y\le 0$ with invisible fold point.
Sliding cycles can also appear in the half-plane $y\ge 0$ with a visible fold point (see, e.g. \cite[Figure 2]{HK24}), and their study is analogous to the study of $\Gamma$.

A function $f(\xi,\epsilon)$ is said to be $\epsilon-$regularly $C^k$ in $\xi$ if $f$ is continuous in $(\xi,\epsilon)$, including $\epsilon=0$, and all partial derivatives of $f$ w.r.t. $\xi$ up to order $k$ exist and are continuous in $(\xi, \epsilon)$, including $\epsilon = 0$ (see e.g. \cite{DDR21}). We can take the degree of smoothness $k$ as large as we want, up to shrinking domain. Moreover, if $f$ and all partial derivatives of $f$ w.r.t. $\xi$ up to order $k$ tend to $0$ as $\epsilon\to 0$ (uniformly in $\xi$), then we write $f=o(1)$.

\section{Statement of main results}
\label{sec2}

With assumptions (A1)--(A4) kept in mind, this section is devoted to stating our main results. Before we state them in Section \ref{sec2.2}, we look more carefully at assumptions (A1) and (A2).

\subsection{Assumptions (A1) and (A2)}\label{subsection-assump}
Using the definitions of stable and unstable sliding points and visible-invisible two-fold singularities described in Section \ref{section-intro} (see Figure \ref{fig:PWS-Smooth}(a)), it can be directly checked that the assumption (A1) is equivalent to
\begin{equation}\label{A1-1}
    \begin{array}{ll}
    Y_{0,c}^\pm(0,0)=0, \ \ X_{0,c}^{+}(0,0)>0, \ \  \partial_{x}Y_{0,c}^{+}(0,0)> 0, \ \ X_{0,c}^{-}(0,0)<0,  \ \ \partial_{x}Y_{0,c}^{-}(0,0)<0; \\
    [3mm]
    Y_{0,c}^+(x,0)<0 \ \text{and} \ Y_{0,c}^-(x,0)>0 \ \ \text{if} \ \  x<0, \ \ \text{and} \ \ Y_{0,c}^+(x,0)>0 \ \text{and} \ Y_{0,c}^-(x,0)<0 \ \ \text{if} \ \  x>0,
    \end{array}
\end{equation}
for each $c\sim c_0$. The first line of Equation \eqref{A1-1} means that the origin is a visible-invisible two-fold. The second line of Equation \eqref{A1-1} is the definition of stable and unstable sliding subsets. Moreover, from \eqref{A1-1} it follows that $\partial_{x}(Y_{0,c}^{+}-Y_{0,c}^{-})(0,0)> 0$, and therefore the sliding vector field $X_{0,c}^{sl}$ in \eqref{slvf} has a removable singularity in $x=0$, that is,
$$X_{0,c}^{sl}(x)=\nu(c)+O(x), \ \ \nu(c)=\frac{(\det{Z_{0,c}})'(0)}{\partial_{x}(Y_{0,c}^{+}-Y_{0,c}^{-})(0,0)},$$
where the function $\det{Z_{0,c}}$ is defined in \eqref{slvf-1}. In assumption (A2) we require that $\nu(c_0)>0$, that is, $(\det{Z_{0,c_0}})'(0)>0$, and
$$\det{Z_{0,c_0}}(x)<0 \ \ \text{for} \ \  \eta_{-}\le x<0 \ \ \text{and} \ \ \det{Z_{0,c_0}}(x)>0 \ \  \text{for} \ \  0<x\le \eta_{+},$$
except for a finite number of zeros of $\det{Z_{0,c_0}}$, with finite multiplicity. Let $x_0\in [\eta_{-},\eta_{+}]\setminus\{0\}$. Using \eqref{slvf}, we have that $x_0$ is a zero of multiplicity $n$ of $X_{0,c_0}^{sl}$ if and only if $x_0$ is a zero of multiplicity $n$ of $\det{Z_{0,c_0}}$.

\subsection{Statement of results}\label{sec2.2}

Firstly, we define the notion of cyclicity of the sliding cycle $\Gamma$. From now on, denote the regularized PWS  system \eqref{PWSRescaled} by $Z_{\epsilon,\tilde \lambda,c}$. The reader should not confuse this notation with the components $Z_{\epsilon\tilde\lambda,c}^{\pm}$ of \eqref{PWS-1}. 

\begin{definition}\label{def-cyclicity}
{\rm We say that the cyclicity of $\Gamma$ is bounded by $N\in\mathbb N$ if there exist $\delta>0$, $\epsilon_{0}>0$ and a neighborhood $\mathcal{V}$ of $(0,c_{0})\in\mathbb{R}\times \mathbb{R}^l$ in the $(\tilde\lambda,c)$-space such that, for each $(\epsilon,\tilde \lambda,c)\in ]0,\epsilon_{0})\times \mathcal{V}$, $Z_{\epsilon,\tilde \lambda,c}$ has at most $N$ limit cycles, each of them lying $\delta$-close to $\Gamma$ according to Hausdorff distance. The smallest $N$ with this property is called cyclicity of $\Gamma$ in $Z_{\epsilon,\tilde \lambda,c}$. We denote it by $\cycl(Z_{\epsilon,\tilde \lambda,c},\Gamma)$.}
\end{definition}

The main results of this paper provide upper bounds for the cyclicity of sliding cycles $\Gamma$ in the cases sketched in Figure \ref{FIG3}. We refer to the statements of Theorems \ref{Thm-0}--\ref{Thm-4} for a precise description of each configuration.

\begin{figure}[htp]
\begin{center}
\includegraphics[width=6in]{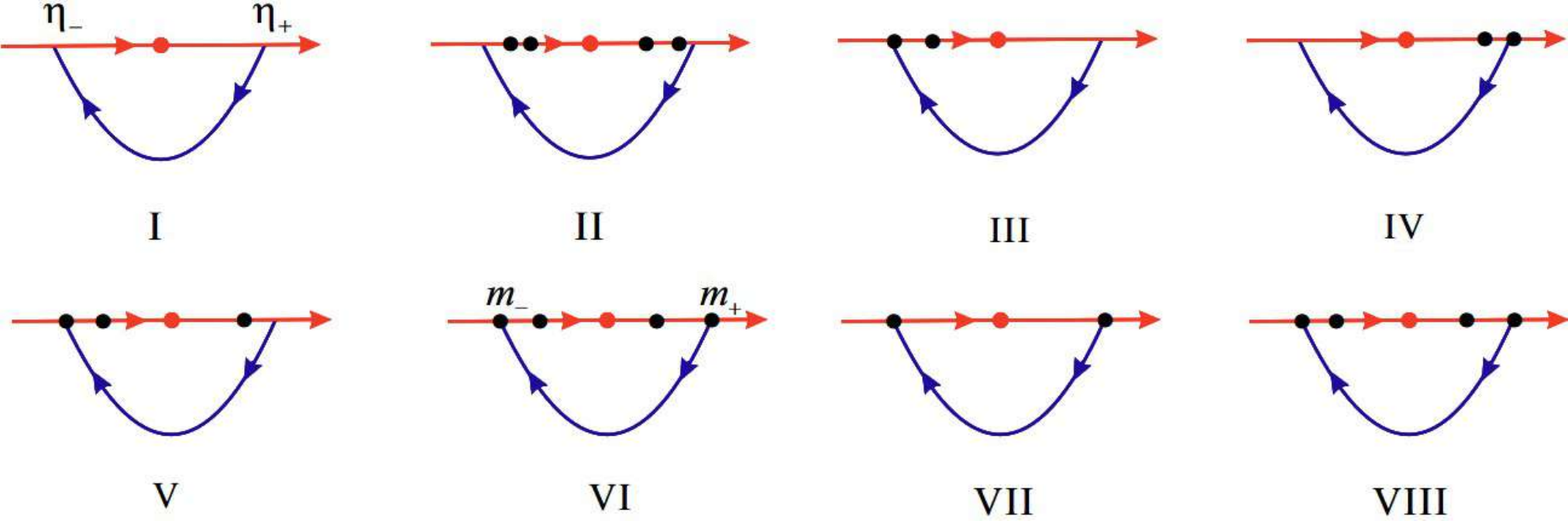}
\caption{\footnotesize{Different types of sliding cycles $\Gamma$, depending on the location of singularities of the sliding vector field $X_{0,c_0}^{sl}$. The red line is the switching manifold $y=0$ and blue curves are orbits of $Z^{-}_{0,c_0}$.
 Red dots are two-folds, black dots represent singularities of $X_{0,c_0}^{sl}$.}}\label{FIG3}
\end{center}
\end{figure}

Denote by $\Pi_c$ the Poincar\'{e} half-map from $\Sigma\cap \{x>0\}$, with $x\sim \eta_{+}$, to $\Sigma\cap \{x<0\}$ by following the orbits of $Z_{0,c}^-$ in forward time. Assumption (A3) implies that $\Pi_c$ is a well-defined $c$-family of smooth diffeomorphisms for $x\sim \eta_{+}$ and $c\sim c_0$, and $\Pi_{c_0}(\eta_{+})=\eta_{-}$.

When the sliding vector field $X_{0,c_0}^{sl}$ is positive on $[\eta_{-},\eta_{+}]\subset\Sigma$, then the following slow divergence integral, computed along the segment $[\Pi_c(x),x]\subset\Sigma$, is well-defined (see \cite[Section 3]{HK23}):
\begin{equation}\label{SDI-statement}
I(x,c)=\int_{\Pi_c(x)}^{x}\frac{(Y_{0,c}^{+}-Y_{0,c}^{-})^{2}(s,0)}{\det Z_{0,c}(s)}\phi'\left(\phi^{-1}\left(\frac{-Y_{0,c}^{-}}{Y_{0,c}^{+}-Y_{0,c}^{-}}(s,0)\right)\right)ds.
\end{equation}
For more details about the slow divergence integral we refer to Section \ref{subsec-e1}.

Theorem \ref{Thm-0} is a direct consequence of \cite{HK23}.

\begin{theorem}
    \label{Thm-0}
    Consider \eqref{PWSRescaled} and suppose that $X_{0,c_0}^{sl}$ is positive on $[\eta_{-},\eta_{+}]\subset\Sigma$ (Case I in Figure \ref{FIG3}). Then the following statements are true.
    \begin{enumerate}
        \item If $I(\eta_{+},c_0)<0$ (resp. $I(\eta_{+},c_0)> 0$), then $\cycl(Z_{\epsilon,\tilde \lambda,c},\Gamma)\le 1$ and the limit cycle, if it exists, is hyperbolic and attracting (resp. repelling).
         \item If $I(x,c_0)$ has a zero of multiplicity $m\ge 1$ at $x=\eta_{+}$, then $\cycl(Z_{\epsilon,\tilde \lambda,c},\Gamma)\le m+1$.
    \end{enumerate}
\end{theorem}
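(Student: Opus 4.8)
The plan is to reduce Theorem \ref{Thm-0} to the canard-cycle machinery developed in \cite{DDR21} and applied in \cite{HK23}, via the slow divergence integral $I(x,c)$ defined in \eqref{SDI-statement}. Since $X_{0,c_0}^{sl}$ is positive on all of $[\eta_{-},\eta_{+}]$ in Case I, the integral $I(x,c)$ is well-defined and smooth (no singularities of the denominator $\det Z_{0,c}$ occur inside the segment), so the obstructions caused by singularities of the sliding vector field that motivate the later theorems are absent here. First I would recall the connection between the divergence integral along orbits and the transition (difference) map: after the rescaling $(\epsilon,\lambda)=(\tilde\epsilon^2,\tilde\epsilon\tilde\lambda)$ and the cylindrical blow-up of $\Sigma$, system \eqref{PWSRescaled} becomes a slow-fast system whose slow dynamics is the flow of $X_{0,c}^{sl}$ (Proposition \ref{prop-slow-sliding}), and $\Gamma$ plays the role of a canard cycle through a slow-fast Hopf point at the two-fold. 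The limit cycles of $Z_{\epsilon,\tilde\lambda,c}$ Hausdorff-close to $\Gamma$ correspond to zeros of a difference map $\Delta(x,\epsilon,\tilde\lambda,c)$ built from the entry-exit relation along $\Gamma$, whose leading-order behaviour (as $\epsilon\to 0$) is governed by $I(x,c)$.

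For Statement 1, I would invoke the standard result (the analogue of \cite[Theorem 3.1]{HK23}) that the difference map satisfies $\Delta(x,\epsilon,\tilde\lambda,c)=\epsilon^{k}\bigl(I(x,c)+o(1)\bigr)$ for an appropriate power $k$, where the $o(1)$ term is $\epsilon$-regularly $C^m$ in $x$ in the sense defined at the end of Section \ref{section-intro}. If $I(\eta_{+},c_0)\neq 0$, then by continuity $I(x,c)$ keeps a fixed sign for $(x,c)$ near $(\eta_{+},c_0)$ and small $\epsilon$, so $\Delta$ also keeps a fixed sign, whence $\Delta$ has at most one zero — in fact the sign condition forces hyperbolicity, and the sign of $\partial_x\Delta$ (equivalently the sign of $I(\eta_{+},c_0)$) determines whether the limit cycle is attracting ($I(\eta_{+},c_0)<0$) or repelling ($I(\eta_{+},c_0)>0$). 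Thus $\cycl(Z_{\epsilon,\tilde\lambda,c},\Gamma)\le 1$.

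For Statement 2, suppose $I(x,c_0)$ has a zero of multiplicity $m\ge 1$ at $x=\eta_{+}$, i.e. $\partial_x^{j}I(\eta_{+},c_0)=0$ for $0\le j\le m-1$ and $\partial_x^{m}I(\eta_{+},c_0)\neq 0$. The plan is a derivative-division (Rolle) argument: the number of zeros of $\Delta(\cdot,\epsilon,\tilde\lambda,c)$ near $x=\eta_{+}$ is bounded by the number of zeros of $I(\cdot,c)$ near $\eta_{+}$, up to the $o(1)$ perturbation, using the $\epsilon$-regular $C^{m+1}$ smoothness of the remainder. Since $I(x,c_0)$ has an isolated zero of multiplicity $m$ and the zero count of a smooth function near such a point, together with a one-parameter unfolding through $c$ and the perturbation $o(1)$, can increase by at most one relative to $m$, one obtains at most $m+1$ zeros of $\Delta$, hence $\cycl(Z_{\epsilon,\tilde\lambda,c},\Gamma)\le m+1$.

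The main obstacle is establishing the precise asymptotic expansion $\Delta=\epsilon^{k}(I(x,c)+o(1))$ together with the $\epsilon$-regular $C^{m+1}$ control of the remainder, since the derivative-division bound in Statement 2 is only as good as the smoothness in which one can assert that the $o(1)$ term and all its relevant $x$-derivatives are genuinely small. This is where the rescaling and blow-up from Section \ref{sec3} and the results of \cite{DD08,DDR21} do the heavy lifting, and I would lean on \cite{HK23} directly, since Case I is exactly the regular-sliding situation treated there; Theorem \ref{Thm-0} is then a transcription of those results into the present notation.
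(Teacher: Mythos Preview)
Your final conclusion matches the paper exactly: the paper's entire ``proof'' of Theorem~\ref{Thm-0} is the single sentence ``Theorem~\ref{Thm-0} is a direct consequence of \cite{HK23}.'' So leaning on \cite{HK23} is precisely the intended argument.

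That said, your intermediate sketch of the mechanism is not quite right, and if you tried to flesh it out as written you would run into trouble. The difference map is \emph{not} of the form $\Delta(x,\epsilon,\tilde\lambda,c)=\epsilon^{k}\bigl(I(x,c)+o(1)\bigr)$. In the blow-up framework of Section~\ref{sec3} (and in \cite{HK23}), it is the \emph{derivative} of the difference map that carries the slow divergence integral, via an exponential: $\Delta_{\pm}'(y)=-\exp\bigl(\epsilon^{-2}(\widetilde I_{\pm}+o(1))\bigr)$, so that $\partial_y\Delta=0$ is equivalent to $\widetilde I_{-}-\widetilde I_{+}+o(1)=0$; in the regular case $\widetilde I_{-}-\widetilde I_{+}$ reduces to $I(\Pi_c^+(y),c)+o(1)$. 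Thus for Statement~1, $I(\eta_+,c_0)\neq 0$ forces $\partial_y\Delta\neq 0$, so $\Delta$ is monotone and has at most one zero (Rolle); for Statement~2, a zero of $I(\cdot,c_0)$ of multiplicity $m$ at $\eta_+$ gives $\partial_y^{m}$ of the left side of \eqref{equ-most-import} nonzero, hence at most $m$ zeros of $\partial_y\Delta$ and at most $m+1$ zeros of $\Delta$. Your reasoning ``$\Delta$ keeps a fixed sign, whence $\Delta$ has at most one zero'' conflates $\Delta$ with $\partial_y\Delta$: a function of fixed sign has no zeros, not one. The correct route is the Rolle step on the derivative, which the paper uses throughout Section~\ref{sec3}.
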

\begin{remark}\label{rem-breaking}
{\rm The main goal of \cite{HK23} was to give a simple criterion (in terms of simple zeros of $I(x,c_0)$) for the existence of limit cycles when the sliding vector field is regular, and the following additional assumption played a key role in \cite{HK23}:
    $$
    \partial_\lambda Y_{\lambda,c}^- \partial_x Y_{\lambda,c}^+\ne \partial_\lambda Y_{\lambda,c}^+ \partial_x Y_{\lambda,c}^-$$
 at $(x,y,\lambda,c)=(0,0,0,c_0)$. We refer to \cite[Theorem 3.1]{HK23}. This assumption is not relevant when we deal with upper bounds for the number of limit cycles produced by $\Gamma$, as in the present paper.}
\end{remark}

In the Theorem \ref{Thm-1} we assume that the sliding vector field $X_{0,c_0}^{sl}$ is regular at corner points $x=\eta_{\pm}$ of the sliding cycle $\Gamma$ and the open interval $(\eta_{-},\eta_{+})$ contains singularities of $X_{0,c_0}^{sl}$ (of even multiplicity). Recall the assumption (A2) and see Case II in Figure \ref{FIG3}. In this case, the slow divergence integral in \eqref{SDI-statement} is not well-defined for $(x,c)=(\eta_{+},c_0)$ due to the presence of singularities in $(\eta_{-},\eta_{+})$ but its derivative
\begin{align}\label{eq-der-SDI}
\frac{\partial I}{\partial x}(x,c)=& \frac{(Y_{0,c}^{+}-Y_{0,c}^{-})^{2}(x,0)}{\det Z_{0,c}(x)}\phi'\left(\phi^{-1}\left(\frac{-Y_{0,c}^{-}}{Y_{0,c}^{+}-Y_{0,c}^{-}}(x,0)\right)\right)\nonumber\\
 &-  \Pi_c'(x)\frac{(Y_{0,c}^{+}-Y_{0,c}^{-})^{2}(\Pi_c(x),0)}{\det Z_{0,c}(\Pi_c(x))}\phi'\left(\phi^{-1}\left(\frac{-Y_{0,c}^{-}}{Y_{0,c}^{+}-Y_{0,c}^{-}}(\Pi_c(x),0)\right)\right)
\end{align}
is a well-defined smooth function in a small neighborhood of $(x,c)=(\eta_{+},c_0)$. Theorem \ref{Thm-1} will be proved in Section \ref{proof-1}.

\begin{theorem}\label{Thm-1}
Suppose that both $X_{0,c_0}^{sl}(\eta_{-})$ and $X_{0,c_0}^{sl}(\eta_{+})$ are nonzero and $(\eta_{-},\eta_{+})$ contains singularities of $X_{0,c_0}^{sl}$ (Case II in Figure \ref{FIG3}). Then $\cycl(Z_{\epsilon,\tilde \lambda,c},\Gamma)\leq 2+m$, where $m$ denotes the multiplicity of zero of $\frac{\partial I}{\partial x}(x,c_0)$ at $x=\eta_{+}$.
\end{theorem}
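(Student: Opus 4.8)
The plan is to carry \eqref{PWSRescaled} into the slow-fast framework of \cite{DD08} by a cylindrical blow-up of $\Sigma$, and then to bound the zeros of a difference map by means of the connection between the divergence integral along orbits and transition maps. First I would apply the blow-up of Section \ref{sec3} and Appendix \ref{appendix-extra-blowup}, after which \eqref{PWSRescaled} becomes a smooth slow-fast family near a curve of singularities $\overline{C}$ on top of the cylinder. By (A1) the visible-invisible two-fold at the origin lifts to the turning point, the stable (resp. unstable) sliding portion of $[\eta_-,\eta_+]$ lifts to the attracting (resp. repelling) branch of $\overline{C}$, and by Proposition \ref{prop-slow-sliding} the slow dynamics on $\overline{C}$ is equivalent to the flow of $X_{0,c}^{sl}$. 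Thus the sliding cycle $\Gamma$ lifts to a canard-type cycle, and by (A2) the interior zeros of $X_{0,c_0}^{sl}$ become singularities of the slow dynamics of even multiplicity, through which the slow flow still passes with a definite direction; this is precisely the configuration treated in \cite{DD08}.

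I would then fix sections transverse to $\Gamma$ near the corner $(\eta_+,0)$ and introduce a difference map $\Delta(x,\epsilon,\tilde\lambda,c)$, with $x\sim\eta_+$, whose isolated zeros are in one-to-one correspondence with limit cycles of $Z_{\epsilon,\tilde\lambda,c}$ lying $\delta$-close to $\Gamma$ in Hausdorff distance. Bounding $\cycl(Z_{\epsilon,\tilde\lambda,c},\Gamma)$ is then reduced to bounding, uniformly in $(\epsilon,\tilde\lambda,c)$, the number of zeros in $x$ of $\Delta$ counted with multiplicity. Writing $\Delta$ as the difference between the transition along $\overline{C}$ (from near $\eta_-$, through the turning point and across the interior singularities, to near $\eta_+$) and the regular return governed by (A3) through the map $\Pi_c$, I would apply the connection between the divergence integral along orbits and transition maps to each transition. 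Since the two boundary terms appearing in \eqref{eq-der-SDI} are exactly the divergence collected near the entry point $x\sim\eta_+$ and the exit point $\Pi_c(x)\sim\eta_-$ --- both away from the singular interior --- this connection should yield that $\partial_x\Delta$ equals, up to a strictly positive $\epsilon$-regular factor and higher-order terms, the well-defined function $\frac{\partial I}{\partial x}(x,c)$.

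The main obstacle is the treatment of the sliding segment $[\eta_-,\eta_+]$, where --- unlike in Theorem \ref{Thm-0} --- the slow divergence integral \eqref{SDI-statement} diverges because of the interior singularities, so that $I(\eta_+,c_0)$ is unavailable and only its derivative $\frac{\partial I}{\partial x}$ survives. To control the transition across this segment I would invoke the estimates of \cite{DD08} for the passage near the attracting and repelling branches of $\overline{C}$ through singularities of even multiplicity. The key point these estimates must provide is that the divergent part of the integral enters $\Delta$ only through a controlled factor along the singular branches, which can be absorbed into the positive $\epsilon$-regular prefactor and therefore does not perturb the multiplicity of $\partial_x\Delta$ at $\eta_+$. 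The delicate work is to make these estimates uniform in $(\epsilon,\tilde\lambda,c)$ after the blow-up and to match them with the regular transitions at the corners, where $X_{0,c_0}^{sl}(\eta_\pm)\neq 0$ and assumption (A3) are essential.

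It remains to count zeros. Since $\Gamma$ is invariant in the singular limit, $\Delta(\eta_+,0,0,c_0)=0$; combined with the previous step --- $\partial_x\Delta$ agreeing with $\frac{\partial I}{\partial x}$ up to a positive factor, and $\frac{\partial I}{\partial x}(\cdot,c_0)$ having a zero of multiplicity $m$ at $\eta_+$ --- this shows that $\Delta(\cdot,0,0,c_0)$ has a zero of multiplicity $m+1$ at $\eta_+$. Applying the derivation-division scheme used for Theorem \ref{Thm-0} in \cite{HK23}, in which the breaking parameter $\tilde\lambda$ accounts for one extra unit beyond the multiplicity of the controlling function, then gives $\cycl(Z_{\epsilon,\tilde\lambda,c},\Gamma)\le (m+1)+1=2+m$. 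In this way the ``$2$'' splits into one unit forced by the interior singularities, which leave us only with the derivative $\frac{\partial I}{\partial x}$ in place of $I$ (raising the effective multiplicity from $m$ to $m+1$), and one unit coming from the canard mechanism already present in Theorem \ref{Thm-0}.
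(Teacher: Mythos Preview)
Your overall strategy---cylindrical blow-up, reduction to the \cite{DD08} framework, and counting zeros of a difference map---matches the paper's. But the mechanism you describe for extracting $\frac{\partial I}{\partial x}$ and for the final count would not go through as written.

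The assertion that $\partial_x\Delta$ equals $\frac{\partial I}{\partial x}(x,c)$ up to a strictly positive $\epsilon$-regular factor is incorrect. What the divergence/transition connection (Lemma~\ref{Lemma3}) actually yields is
\[
\Delta_\pm'(y)=-\exp\tfrac{1}{\epsilon^2}\bigl(\widetilde I_\pm(\Delta_{02}^\pm(y))+o(1)\bigr),
\]
so $\Delta'$ is a \emph{difference of two exponentials} with exponents of order $\epsilon^{-2}$, not a positive multiple of $\frac{\partial I}{\partial x}$. Nor can you evaluate $\Delta$ at $\epsilon=0$ and read off a finite multiplicity: the canard transitions are only defined for $\epsilon>0$, and in the limit both $\Delta_\pm$ collapse to the same constant (the $y_2$-coordinate of $p_0$), so $\Delta(\cdot,0,0,c_0)\equiv 0$ carries no multiplicity information. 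The paper instead applies Rolle's theorem once, passing from zeros of $\Delta$ to zeros of $\Delta'$; for $\epsilon>0$ the equation $\Delta'=0$ is \emph{equivalent} to
\[
\widetilde I_-(\Delta_{02}^-(y))-\widetilde I_+(\Delta_{02}^+(y))+o(1)=0,
\]
and only after differentiating \emph{this} expression in $y$ does $\frac{\partial I}{\partial x}$ appear, via the \cite{DD08} estimate $\partial_x\widetilde I_\pm=\partial_x I_\pm+o(1)$, valid precisely because the corners $\eta_\pm$ are regular for $X^{sl}_{0,c_0}$. Its $(m+1)$-st $y$-derivative is then nonzero, bounding the solutions by $m+1$; Rolle adds one, giving $m+2$.

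Finally, the extra ``$+1$'' does not come from the breaking parameter $\tilde\lambda$: as Remark~\ref{rem-breaking} states, the regular-breaking condition is irrelevant for upper bounds. The correct decomposition of the ``$2$'' is one unit from Rolle and one unit because the interior singularities force you to control $\frac{\partial I}{\partial x}$ (hence one further derivative) rather than $I$ itself. Your intuition about this split is right, but the route must go through the exponential form of $\Delta_\pm'$ and a second differentiation of the divergence-integral equation, not through a direct expansion of $\partial_x\Delta$ at $\epsilon=0$.
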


Theorem \ref {Thm-2} concerns Cases III, IV and V in Figure \ref{FIG3}. More precisely, we assume now that the sliding vector field $X_{0,c_0}^{sl}$ has a singularity at one corner point only, either $x=\eta_{-}$ or $x=\eta_{+}$. Theorem \ref{Thm-2} will be proved in Section \ref{proof-2}.

%\begin{theorem}\label{Thm-1}
%Suppose that $X_{\lambda_0}^{sl}(x,c_0)$ has precisely one singularity at the corner point $x=\eta_{+}$ without extra singularities (Case I in Figure \ref{FIG3}), then $\cycl(Z_{\epsilon,\lambda},\Gamma_{I}(\eta_{+}))\leq 1$ and the limit cycle, if it exists, is hyperbolic and repelling.
%\end{theorem}

%\begin{theorem}\label{Thm-2}
%Suppose that $X_{\lambda_0}^{sl}(x,c_0)$ has precisely one singularity at the corner point $x=\eta_{-}$ without extra singularities (Case II in Figure \ref{FIG3}), then $\cycl(Z_{\epsilon,\lambda},\Gamma_{II}(\eta_{+}))\leq 1$ and the limit cycle, if it exists, is hyperbolic and attracting.
%\end{theorem}

\begin{theorem}\label{Thm-2} For system \eqref{PWSRescaled}, the following statements hold.
\begin{enumerate}
\item Suppose that $X_{0,c_0}^{sl}(\eta_{-})=0$, $X_{0,c_0}^{sl}(\eta_{+})\ne 0$ and that $X_{0,c_0}^{sl}$ has no singularities in the unstable sliding interval $(0,\eta_{+})$ ($X_{0,c_0}^{sl}$ may have singularities in the stable sliding interval $(\eta_{-},0)$). See Case III in Figure \ref{FIG3}. Then $\cycl(Z_{\epsilon,\tilde \lambda,c},\Gamma)\leq 1$. The limit cycle, if it exists, is hyperbolic and attracting.
\item Suppose that $X_{0,c_0}^{sl}(\eta_{-})\ne 0$, $X_{0,c_0}^{sl}(\eta_{+})=0$ and that $X_{0,c_0}^{sl}$ has no singularities in the stable sliding interval $(\eta_{-},0)$ ($X_{0,c_0}^{sl}$ may have singularities in the unstable sliding interval $(0,\eta_{+})$). See Case IV in Figure \ref{FIG3}. Then $\cycl(Z_{\epsilon,\tilde \lambda,c},\Gamma)\leq 1$. The limit cycle, if it exists, is hyperbolic and repelling.
\item Suppose that $X_{0,c_0}^{sl}(\eta_{-})=0$ and $X_{0,c_0}^{sl}(\eta_{+})\neq 0$ ($X_{0,c_0}^{sl}$ may have singularities in $(\eta_{-},\eta_{+})$). See Case V in Figure \ref{FIG3}. Then $\cycl(Z_{\epsilon,\tilde \lambda,c},\Gamma)\leq 2$. A similar result is true in the case $X_{0,c_0}^{sl}(\eta_{-})\ne 0$, $X_{0,c_0}^{sl}(\eta_{+})=0$.
\end{enumerate}
\end{theorem}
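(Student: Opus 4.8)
The plan is to place Statement 3 inside the slow–fast framework of Section \ref{sec3} and then run the counting argument of Theorem \ref{Thm-1}, the genuinely new feature being the singular entry at the attracting corner $\eta_{-}$. First I would perform the cylindrical blow-up of $\Sigma$ together with the rescaling $(\epsilon,\lambda)=(\tilde\epsilon^2,\tilde\epsilon\tilde\lambda)$, bringing \eqref{PWSRescaled} into the setting of \cite{DD08} near the curve of singularities $\overline C$ on the top of the cylinder. By Proposition \ref{prop-slow-sliding} the slow dynamics along $\overline C$ coincides with the flow of $X_{0,c}^{sl}$, so $\Gamma$ lifts to a canard-type limit periodic set whose slow part runs from the attracting corner $\eta_{-}$, where $X_{0,c_0}^{sl}(\eta_{-})=0$, through the slow–fast Hopf point at the origin and the (even-multiplicity) interior singularities, up to the regular repelling corner $\eta_{+}$, and whose fast part is the orbit of $Z_{0,c}^{-}$ realizing $\Pi_c$.

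Next I would write the first return map associated with $\Gamma$ as $R_{\epsilon,\tilde\lambda,c}=\Pi_c\circ T_{\epsilon,\tilde\lambda,c}$, where $T_{\epsilon,\tilde\lambda,c}$ is the slow transition from a section near $\eta_{-}$ to a section near $\eta_{+}$, and I would bound the limit cycles $\delta$-close to $\Gamma$ by the zeros of the difference map $\Delta_{\epsilon,\tilde\lambda,c}(x)=R_{\epsilon,\tilde\lambda,c}(x)-x$ for $x\sim\eta_{+}$. Using the connection between the divergence integral along orbits and $T_{\epsilon,\tilde\lambda,c}'$ established in \cite{DD08}, the logarithm of $R_{\epsilon,\tilde\lambda,c}'$ is governed at leading order by the divergence integral whose $x$-derivative is \eqref{eq-der-SDI}. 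The decisive observation is that, since $\Pi_{c_0}(\eta_{+})=\eta_{-}$ and $\det Z_{0,c_0}(\eta_{-})=0$, the second summand of \eqref{eq-der-SDI} diverges as $x\to\eta_{+}$, with a sign fixed by $\det Z_{0,c_0}<0$ on the stable side; hence $\frac{\partial I}{\partial x}(x,c_0)$ blows up with a definite sign near $\eta_{+}$ and, in particular, does not vanish there. This plays the role of the non-degeneracy condition $m=0$ in Theorem \ref{Thm-1}.

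The counting then proceeds as for Theorem \ref{Thm-1}: by Rolle's theorem the number of zeros of $\Delta_{\epsilon,\tilde\lambda,c}$ is at most one more than the number of zeros of $\Delta_{\epsilon,\tilde\lambda,c}'$, and the equation $\Delta_{\epsilon,\tilde\lambda,c}'(x)=0$ amounts to $R_{\epsilon,\tilde\lambda,c}'(x)=1$. Because the divergence integral is monotone near $\eta_{+}$ (its derivative having the fixed sign forced by the corner singularity), $R_{\epsilon,\tilde\lambda,c}'$ is monotone there and crosses the value $1$ at most once, so $\Delta_{\epsilon,\tilde\lambda,c}$ has at most two zeros and $\cycl(Z_{\epsilon,\tilde\lambda,c},\Gamma)\le 2$. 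The two cycles correspond to the strongly attracting passage near $\eta_{-}$ (as in Statement 1) together with at most one additional crossing produced by the competing positive contribution of the interior singularities located in the unstable interval $(0,\eta_{+})$; this also explains why, when such singularities are absent, $R_{\epsilon,\tilde\lambda,c}'<1$ throughout and the bound improves to $1$. The symmetric case $X_{0,c_0}^{sl}(\eta_{-})\neq 0$, $X_{0,c_0}^{sl}(\eta_{+})=0$ is handled analogously, reversing the roles of the attracting and repelling sides: now the first summand of \eqref{eq-der-SDI} diverges at $\eta_{+}$, yielding the same monotonicity and the same bound.

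The hard part will be the analysis of the slow transition $T_{\epsilon,\tilde\lambda,c}$ across the attracting corner $\eta_{-}$, where a fast fibre lands exactly on a singularity of the slow dynamics and the slow divergence integral \eqref{SDI-statement} is no longer convergent. Here one must rely on the blow-up of the curve of singularities $\overline C$ and the transition-map estimates of \cite{DD08} to produce an $\epsilon$-regularly $C^k$ expression for $T_{\epsilon,\tilde\lambda,c}$ and for $R_{\epsilon,\tilde\lambda,c}'$, and to verify that the non-integrable contributions of the interior even-multiplicity singularities enter only through the (constant in $x$) divergent part of the integral and therefore do not create additional zeros of $\Delta_{\epsilon,\tilde\lambda,c}'$ beyond the single one allowed above.
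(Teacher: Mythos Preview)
Your plan is correct and essentially matches the paper's proof: both reduce the counting to the divergence-integral equation \eqref{equ-most-import} and use that, at the singular corner, the contribution $\widetilde I_-$ (and its $x$-derivative) blows up while the regular-corner contribution $\widetilde I_+$ stays bounded, yielding no solutions of \eqref{equ-most-import} for Statement~1 and at most one for Statement~3, hence cyclicity bounds $1$ and $2$ via Rolle. The only difference is that the paper invokes the \cite{DD08} estimates on the $\epsilon$-dependent integrals $\widetilde I_\pm$ directly (namely $\widetilde I_-\to-\infty$, $|\partial_x\widetilde I_-|\to+\infty$) rather than routing through the limit formula \eqref{eq-der-SDI}, which is not literally well-defined at the singular corner; once you cite those estimates, the ``hard part'' you flag is immediate.
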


In the rest of this section we assume that both corner points are singularities, that is, $X_{0,c_0}^{sl}(\eta_{-})=X_{0,c_0}^{sl}(\eta_{+})=0$.

Theorem \ref{Thm-3}, whose proof will be given in Section \ref{proof-3}, concerns Case VI in Figure \ref{FIG3}. It addresses the case in which $x=\eta_-$ or $x=\eta_+$ is a non-hyperbolic singularity of $X_{0,c_0}^{sl}$, that is, $(X_{0,c_0}^{sl})'(\eta_{-})=0$ or $(X_{0,c_0}^{sl})'(\eta_{+})=0$.

\begin{theorem}\label{Thm-3}
Consider system \eqref{PWSRescaled} and suppose that $x=\eta_\pm$ is a zero of multiplicity $m_\pm\ge 1$ of $X_{0,c_0}^{sl}$, with $m_-\ne m_+$ (Case VI in Figure \ref{FIG3}). Then $\cycl(Z_{\epsilon,\tilde \lambda,c},\Gamma)\leq 2+\min\{m_-,m_+\}$.
\end{theorem}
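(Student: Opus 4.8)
The plan is to reuse the cylindrical blow-up of Section \ref{sec3} to put \eqref{PWSRescaled} into the setting of \cite{DD08} near the curve of singularities $\overline C$: the cycle $\Gamma$ lifts to a canard cycle whose slow part is the segment joining the two slow equilibria at the corners $\eta_\pm$ (of multiplicities $m_\pm$) through the canard point, closed up by the fast connection $\gamma$ (see Figure \ref{FIG6}). Following the scheme used to prove Theorem \ref{Thm-1}, this reduces $\cycl(Z_{\epsilon,\tilde\lambda,c},\Gamma)$ to a bound of the form $2+\bigl(\text{number of zeros near } x=\eta_+ \text{ of the function controlling the displacement map}\bigr)$, the summand $2$ coming from the canard passage and the derivation step. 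The only new feature relative to Theorem \ref{Thm-1} is that now \emph{both} ends of the slow segment are zeros of $X_{0,c_0}^{sl}$, so this function is singular at the corners and must first be desingularized.

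Write the integrand of \eqref{SDI-statement} as $G(s,c)=N(s,c)/\det Z_{0,c}(s)$, where $N(s,c)>0$ is the smooth numerator (which does not vanish near the corners), so that \eqref{eq-der-SDI} reads $\partial_x I(x,c)=G(x,c)-\Pi_c'(x)\,G(\Pi_c(x),c)$. Since $\det Z_{0,c_0}$ vanishes at both $\eta_\pm$ and $\Pi_{c_0}(\eta_+)=\eta_-$, the right-hand side has a pole at $x=\eta_+$. Clearing denominators, I set
\begin{equation*}
\Phi(x,c):=\det Z_{0,c}(\Pi_c(x))\,N(x,c)-\Pi_c'(x)\,\det Z_{0,c}(x)\,N(\Pi_c(x),c),
\end{equation*}
so that $\Phi=\det Z_{0,c}(x)\,\det Z_{0,c}(\Pi_c(x))\,\partial_x I$ is smooth near $(\eta_+,c_0)$.

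The key step is to determine the multiplicity of the zero of $\Phi(\cdot,c_0)$ at $\eta_+$. Near $\eta_+$ we have $\det Z_{0,c_0}(x)\sim a_+(x-\eta_+)^{m_+}$ with $a_+\ne0$, and since $\Pi_{c_0}(x)-\eta_-\sim\Pi_{c_0}'(\eta_+)(x-\eta_+)$, also $\det Z_{0,c_0}(\Pi_{c_0}(x))\sim a_-\,(\Pi_{c_0}'(\eta_+))^{m_-}(x-\eta_+)^{m_-}$ with $a_-\ne0$. Thus the two terms of $\Phi(\cdot,c_0)$ have orders $m_-$ and $m_+$ at $\eta_+$; because $m_-\ne m_+$, the lower-order term carries a nonzero coefficient and no cancellation occurs, so $\Phi(\cdot,c_0)$ vanishes to order exactly $\min\{m_-,m_+\}$. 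Hence for $c\sim c_0$ the smooth function $\Phi(\cdot,c)$ has at most $\min\{m_-,m_+\}$ zeros near $\eta_+$, and since $\det Z_{0,c}(x)\det Z_{0,c}(\Pi_c(x))\ne0$ away from the split corner zeros, the zeros of $\partial_x I(\cdot,c)$ that are relevant for limit cycles are exactly those of $\Phi(\cdot,c)$ there. Feeding this into the reduction of the first paragraph yields $\cycl(Z_{\epsilon,\tilde\lambda,c},\Gamma)\le 2+\min\{m_-,m_+\}$.

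The main obstacle is to justify, through the blow-up and the $\epsilon$-regular transition-map estimates of \cite{DD08}, that the poles of $\partial_x I$ created by the corner zeros of $\det Z_{0,c}$ do not generate additional limit cycles, so that the count is genuinely governed by the smooth numerator $\Phi$; controlling the remainders of the transition maps near the two slow equilibria is the delicate point. The hypothesis $m_-\ne m_+$ enters precisely in the multiplicity computation above: if $m_-=m_+$ the two leading terms of $\Phi$ share the same order and their coefficients can cancel, producing a zero of higher and unstable multiplicity, i.e. a resonant configuration lying outside the present bound.
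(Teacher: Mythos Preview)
Your heuristic is right---clear the denominators produced by the corner zeros and count the multiplicity of the resulting smooth numerator---and your computation that $\Phi(\cdot,c_0)$ vanishes to order exactly $\min\{m_-,m_+\}$ at $\eta_+$ is correct and is morally the same as the paper's. The problem is that you desingularize the \emph{wrong} object: you work with $\partial_x I(x,c)$, which is the $\epsilon=0$ limit, whereas the function whose zeros must be counted is the left-hand side of \eqref{equ-most-import}. The bridge you invoke from Theorem~\ref{Thm-1}, namely \eqref{proofth2.2-1}--\eqref{proofth2.2-2}, writes $\partial_x\widetilde I_\pm=\partial_x I_\pm+o(1)$, but this identity is only established in \cite{DD08} under the hypothesis $X_{0,c_0}^{sl}(\eta_\pm)\ne0$; when the corners are singular both sides blow up and the $o(1)$ remainder is no longer controlled. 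So ``following the scheme used to prove Theorem~\ref{Thm-1}'' is exactly what fails here, and your last paragraph correctly flags this as the unresolved obstacle rather than a technicality.

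The paper closes this gap by never passing through $\partial_x I$. Instead it uses the $\epsilon$-dependent $C^k$ normal form \eqref{Cknormalform} from \cite{DD08} near each corner, obtaining
\[
\frac{\partial\widetilde I_\pm}{\partial x}(x)=\frac{(1+O(\epsilon^2))\,\partial_x\beta_\pm(x)}{h_\pm(\beta_\pm(x))}+O(1),
\]
with $h_\pm$ and $\beta_\pm$ depending on $(\epsilon,\tilde\lambda,c)$ and $O(1)$ $\epsilon$-regularly $C^k$. Multiplying $\partial_y\bigl(\widetilde I_-(\Delta_{02}^-(y))-\widetilde I_+(\Delta_{02}^+(y))\bigr)$ by $h_-(H^-(y))\,h_+(H^+(y))$ then yields a genuinely smooth function of $(y,\epsilon,\tilde\lambda,c)$ whose limit at $(\epsilon,\tilde\lambda,c)=(0,0,c_0)$ is exactly your $\Phi$ (up to a diffeomorphic reparametrization), so the multiplicity $\min\{m_-,m_+\}$ is inherited uniformly in $\epsilon$. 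The point is that the denominators you clear must be the $\epsilon$-dependent $h_\pm$, not $\det Z_{0,c}$; this is what guarantees that the $O(1)$ remainder stays bounded after multiplication and does not spoil the zero count.
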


Finally, Theorem \ref{Thm-4} concerns Cases VII and VIII in Figure \ref{FIG3}, and its proof will be given in Section \ref{proof-4}. Suppose that $x=\eta_\pm$ are hyperbolic singularities of $X_{0,c_0}^{sl}$. Then, for each $(\epsilon,\tilde \lambda,c)\sim(0,0,c_0)$ and $\epsilon>0$, system \eqref{PWSRescaled} has a hyperbolic saddle near $(x,y)=(\eta_\pm,0)$ with ratio of eigenvalues  $-\epsilon^{2}\rho_{\pm}$, $\rho_{\pm}=\rho_{\pm}(\epsilon,\tilde \lambda,c)>0$ for $\epsilon\ge 0$ (for more details see Section \ref{proof-4}).

\begin{theorem}\label{Thm-4}
Consider system \eqref{PWSRescaled}
and suppose that $X_{0,c_0}^{sl}$ has a hyperbolic singularity at both corner points and that $\rho_{-}\ne \rho_{+}$ for $(\epsilon,\tilde \lambda,c)=(0,0,c_0)$. Then the following statements hold.
\begin{enumerate}
    \item If the interval $(\eta_{-},\eta_{+})$ does not contain singularities of $X_{0,c_0}^{sl}$ (see Case VII in Figure \ref{FIG3}), then $\cycl(Z_{\epsilon,\tilde \lambda,c},\Gamma)\leq 2$.
    \item If the interval $(\eta_{-},\eta_{+})$ contains singularities of $X_{0,c_0}^{sl}$ (see Case VIII in Figure \ref{FIG3}), then $\cycl(Z_{\epsilon,\tilde \lambda,c},\Gamma)\leq 3$.
\end{enumerate}
\end{theorem}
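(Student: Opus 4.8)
\section*{Proof proposal for Theorem~\ref{Thm-4}}

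The plan is to reduce Theorem~\ref{Thm-4} to a zero-counting problem for a difference map associated to the return map around $\Gamma$, after bringing \eqref{PWSRescaled} into the slow-fast framework of \cite{DD08}. First I would apply the cylindrical blow-up of the switching manifold described in Section~\ref{sec3} and Appendix~\ref{appendix-extra-blowup}, turning \eqref{PWSRescaled} into a slow-fast system whose slow dynamics is conjugate to the flow of $X_{0,c}^{sl}$ (Proposition~\ref{prop-slow-sliding}). Under this blow-up the sliding cycle $\Gamma$ becomes a canard cycle $\overline\Gamma$ lying along the curve of singularities $\overline C$ on top of the cylinder: it contains the slow-fast Hopf point coming from the two-fold at $x=0$, an attracting branch carrying the stable sliding interval $(\eta_-,0)$, a repelling branch carrying the unstable sliding interval $(0,\eta_+)$, and the fast return arc realized by the regular map $\Pi_c$. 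Since $X_{0,c_0}^{sl}$ has hyperbolic zeros at both corner points, the points $(\eta_\pm,0)$ become, for $\epsilon>0$, hyperbolic saddles with eigenvalue ratio $-\epsilon^2\rho_\pm$, $\rho_\pm>0$, sitting at the two ends of $\overline\Gamma$.

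I would then decompose the return map $Z_{\epsilon,\tilde\lambda,c}$-induced map $R$ on a section transverse to the fast return arc as a composition of four pieces: the Dulac (passage) map $D_-$ near the saddle at $\eta_-$, the canard transition $T$ along the attracting branch, through the Hopf point, and along the repelling branch, the Dulac map $D_+$ near the saddle at $\eta_+$, and the regular fast return induced by $\Pi_c$. For the saddles, the relevant fact is that, in $\epsilon$-regularly $C^k$ normalizing coordinates, $D_\pm$ is governed by a power $u\mapsto u^{\epsilon^2\rho_\pm}$ up to an $\epsilon$-regular factor, uniformly in $(\tilde\lambda,c)\sim(0,c_0)$. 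The derivative of $R$ is then computed through the connection between the divergence integral along orbits and transition maps used throughout the paper: at leading order it is governed by the slow divergence integral \eqref{SDI-statement}. Because $\det Z_{0,c_0}$ has simple zeros at $\eta_\pm$, the integrand of \eqref{SDI-statement} has simple poles there, so $I(x,c)$ diverges logarithmically as $x\to\eta_\pm$; these logarithmic singularities are exactly the ones absorbed into the power-law factors of $D_\pm$, with residues proportional to $\rho_\pm$. The net effect is that $R$ and hence the difference map $\Delta=R-\mathrm{id}$ are $\epsilon$-regular objects whose dominant behavior is a power law with exponent built from $\rho_-$ and $\rho_+$.

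Next I would count the zeros of $\Delta$, each of which corresponds to a limit cycle $\delta$-close to $\Gamma$. The hypothesis $\rho_-\ne\rho_+$ is what makes the dominant power-law term of $\Delta$ genuinely nontrivial (exponent $\ne 1$), so after taking logarithmic derivatives the leading part of $\Delta'$ is monotone in the relevant coordinate up to an $o(1)$ perturbation. Applying Rolle's theorem to pass from zeros of $\Delta$ to zeros of its derivative, together with the monotonicity of this leading term, bounds the number of fixed points of $R$ by $2$ in Case~VII, giving $\cycl(Z_{\epsilon,\tilde\lambda,c},\Gamma)\le 2$. For Case~VIII, the interval $(\eta_-,\eta_+)$ additionally contains singularities of $X_{0,c_0}^{sl}$, which by (A2) have even multiplicity; after blow-up the canard transition $T$ must pass these non-hyperbolic singularities of the slow dynamics, and by the results of \cite{DD08} each such passage contributes at most one additional zero to $\Delta$. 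This raises the bound by one, yielding $\cycl(Z_{\epsilon,\tilde\lambda,c},\Gamma)\le 3$.

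The hard part will be the $\epsilon$-uniform control of the two saddle passages $D_\pm$ and their matching with the canard transition $T$, precisely because the eigenvalue ratios $-\epsilon^2\rho_\pm$ degenerate to $0$ as $\epsilon\to0$: the saddles are strongly non-generic in the limit, the Dulac maps tend to the identity while their derivatives blow up, and the logarithmic divergence of \eqref{SDI-statement} at $\eta_\pm$ must be tracked in $\epsilon$-regular $C^k$ fashion so that it cancels cleanly against the saddle contributions. The assumption $\rho_-\ne\rho_+$ is the structural ingredient that breaks the symmetry between the two ends and prevents this cancellation from producing a flat (identically degenerate) difference map; establishing the required uniform normal forms and the $\epsilon$-regularity of the composed map, as in \cite{DD08,DDR21}, is where the technical weight of the proof lies.
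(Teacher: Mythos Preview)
Your outline for Statement~1 is essentially correct and matches the paper's argument: after blow-up, the divergence integrals $\widetilde I_\pm$ near the hyperbolic corner saddles take the form $\frac{1}{\rho_\pm}\ln\beta_\pm + O(1)$ (equivalently, the Dulac passages are power laws $u\mapsto u^{\epsilon^2\rho_\pm}$ up to regular factors), and the hypothesis $\rho_-\ne\rho_+$ ensures, via \cite[Lemma~4.11]{DD08}, that $\widetilde I_-(\Delta_{02}^-(y))-\widetilde I_+(\Delta_{02}^+(y))+o(1)$ has at most one zero, whence $\cycl\le 2$ by Rolle. The paper uses the forward/backward difference $\Delta_+-\Delta_-$ rather than $R-\mathrm{id}$, but this is immaterial.

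For Statement~2, however, your mechanism is wrong. You write that ``each such passage contributes at most one additional zero to $\Delta$'', attributing the extra unit in the bound to the interior singularities. Taken literally this would make the bound grow with the number of interior singularities, which is false: the paper's bound is $\le 3$ regardless of how many even-multiplicity zeros lie in $(\eta_-,\eta_+)$. The actual reason the bound goes up by one is different. When $(\eta_-,\eta_+)$ contains singularities of $X_{0,c_0}^{sl}$, the expressions $\widetilde I_\pm$ are no longer of the form $\frac{1}{\rho_\pm}\ln\beta_\pm + O(1)$ because the $O(1)$ remainder blows up; one cannot argue directly at the level of $\widetilde I_--\widetilde I_+$. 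Instead one differentiates once in $y$ and uses $\partial_x\widetilde I_\pm=(1+O(\epsilon^2))\,\partial_x\beta_\pm/h_\pm(\beta_\pm)+O(1)$, which is valid irrespective of interior singularities. Clearing denominators by multiplying by $h_-(H^-(y))h_+(H^+(y))$ as in the proof of Theorem~\ref{Thm-3}, the resulting function at $(\epsilon,\tilde\lambda,c)=(0,0,c_0)$ equals $(\rho_+-\rho_-)(y-s_0)+O((y-s_0)^2)$, which has a single simple zero since $\rho_-\ne\rho_+$. Hence $\partial_y(\widetilde I_--\widetilde I_+)$ has at most one zero, $\widetilde I_--\widetilde I_+$ has at most two, and $\cycl\le 3$. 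The extra $+1$ comes from the additional Rolle step forced by having to work with the derivative, not from the interior singularities themselves contributing zeros.
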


Notice that the case $m_-=m_+$ (resp. $\rho_{-}= \rho_{+}$) is not covered by Theorem \ref{Thm-3} (resp. Theorem \ref{Thm-4}). We leave this to future work.

\section{Proof of main results}
\label{sec3}

The objective of this section is to prove Theorems \ref{Thm-1}-\ref{Thm-4}. Firstly, in Subsection \ref{subsection-bu} we blow up system \eqref{PWSRescaled} along the switching manifold, and afterwards in Subsection \ref{section-transition-maps} we study properties of the transition maps of vector fields defined on different charts. Then we use it to reduce the problem of finding the number of limit cycles produced by the sliding cycle $\Gamma$ to the problem of studying the solutions of equation \eqref{equ-most-import}. The main reason why we work with \eqref{equ-most-import} is that we can then use the results of \cite{DD08} (for more details, we refer the reader to Remark \ref{remark-bitno} and Remark \ref{remark-bitno-new}). Finally, in Subsections \ref{proof-1}-\ref{proof-4} we prove Theorems \ref{Thm-1}-\ref{Thm-4}.

\subsection{Cylindrical blow-up}
\label{subsection-bu}

Following the approach of \cite{HK23}, we write system \eqref{PWSRescaled} as the extended vector field
\begin{equation}\label{extended}
Z_{\epsilon,\tilde\lambda,c} + 0\frac{\partial }{\partial \epsilon}.
\end{equation}

We blow up system \eqref{extended} along the switching manifold $\Sigma$ by using the singular coordinate change
\begin{equation}\label{eq-cylin-blowup}
  \begin{array}{rccc}
\Phi: & \mathcal{M} & \rightarrow & \mathbb{R}^{3} \\
& (x,\bar{y},\bar{\epsilon},r) & \mapsto & (x, r^{2}\bar{y}, r\bar{\epsilon}) = (x,y,\epsilon);
\end{array}
\end{equation}
with $\mathcal{M}$ being a manifold with corners, $(\bar{y},\bar{\epsilon})\in\mathbb{S}^{1}$, $r\geq 0$ and $\bar{\epsilon}\geq 0$. Such transformation induces a vector field $\overline{Z}$ on $\mathcal{M}$, which is the pull-back of \eqref{extended} by \eqref{eq-cylin-blowup}. We then study the vector field $\widehat{Z} := r^{2}\overline{Z}$. For studying the cyclicity of sliding cycles, it suffices to investigate the dynamics of $\widehat{Z}$ on the charts $\{\bar{\epsilon}=1\}$ and $\{\bar{y}=-1\}$. See Figure \ref{FIG4}.

\begin{figure}[htp]
\begin{center}
\includegraphics[width=4in]{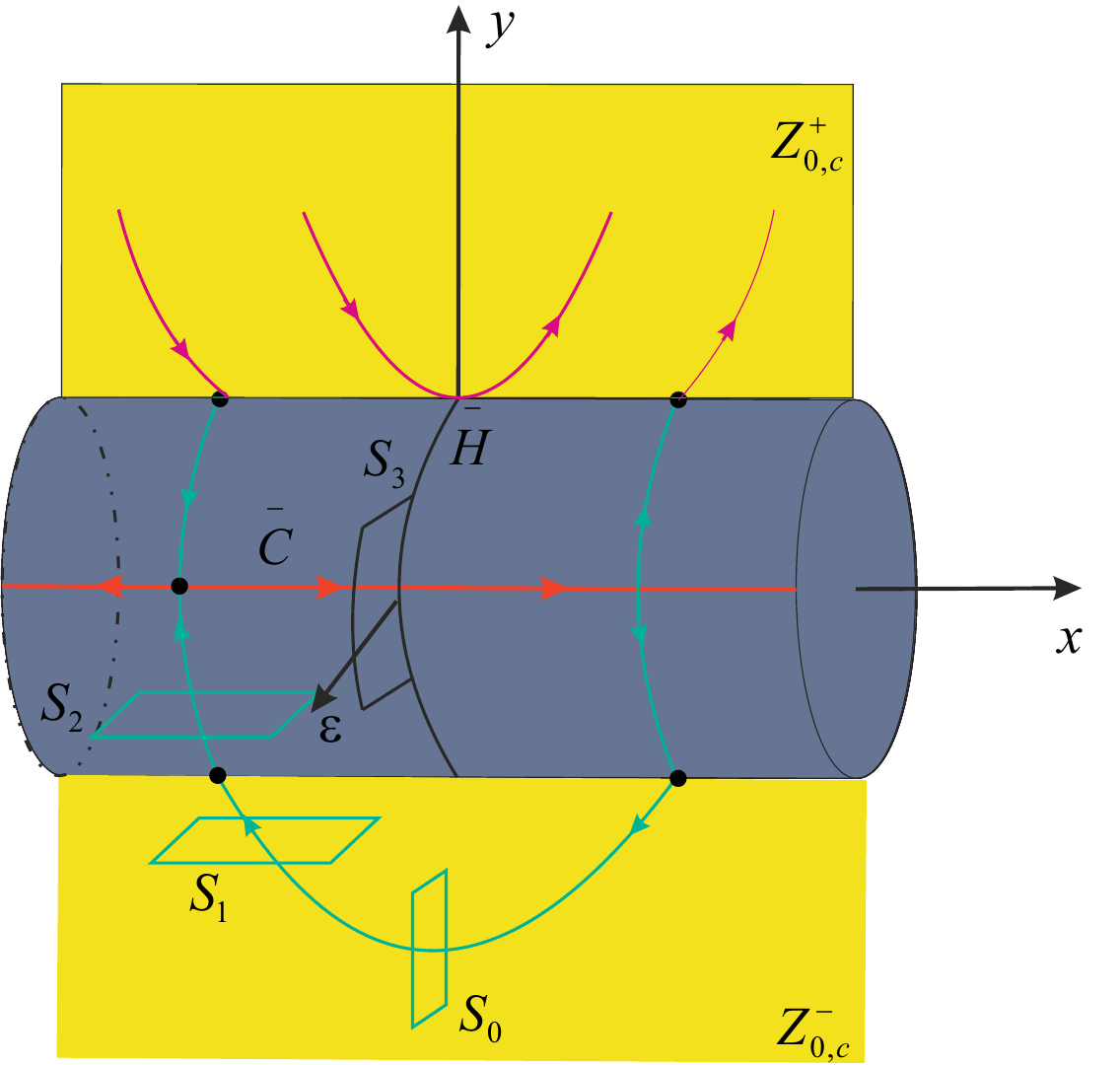}
\caption{\footnotesize{Cylindrical blow-up along $(\Sigma_{sl}\times \{0\})\cup\{(0,0,0)\}$ for the extended system \eqref{extended}. We use the case III of Figure \ref{FIG3} as an example, where $\eta_{-}$ is a singularity of $X_{0,c_0}^{sl}(x)$ with odd multiplicity.}}\label{FIG4}
\end{center}
\end{figure}

\subsubsection{Dynamics in the chart $\{\bar{\epsilon}=1\}$}\label{subsec-e1}
The transformation takes the form $(x,y,\epsilon)=(x_2,r_{2}^{2}y_2,r_2)$, with coordinates $(x_2,y_2,r_2)$. We bring the transformation into system \eqref{extended}, and, after multiplication by $r_{2}^{2}$, we have a slow-fast system
\begin{equation}\label{family}
\begin{array}{rcl}
\dot{x}_{2} & = & r_{2}^{2}\left(X^{+}_{r_2\tilde \lambda, c}(x_2,r_{2}^{2}y_2)\phi(y_2) + X^{-}_{r_2\tilde \lambda, c}(x_2,r_{2}^{2}y_2)\left(1-\phi(y_2)\right)\right), \\[3mm]
\dot{y}_{2} & = & Y^{+}_{r_2\tilde\lambda, c}(x_2,r_{2}^{2}y_2)\phi(y_2)+Y^{-}_{r_2\tilde\lambda, c}(x_2,r_{2}^{2}y_2)\left(1-\phi(y_2)\right).
\end{array}
\end{equation}

Setting $r_2=0$ in \eqref{family}, we obtain
\begin{equation}\label{family-0}
\dot{x}_2 = 0, \quad \dot{y}_2 = Y^{+}_{0,c}(x_2,0)\phi(y_2)+Y^{-}_{0,c}(x_2,0)\left(1-\phi(y_2)\right).
\end{equation}

From Assumption (A1) (see also Equation \eqref{A1-1}), it follows that there are two branches of curve of singularities (see Figure \ref{FIG4})
$$\overline{H}:=\{(0,y_2)\mid y_2\in \mathbb{R}\}, \quad \overline{C}:= \left\{(x_2,y_2)\mid y_2 = \phi^{-1}\left(\frac{-Y^{-}_{0,c}}{Y^{+}_{0,c}-Y^{-}_{0,c}}(x_2,0)\right)\right\},$$
which intersect transversally at the point $p_0=\left(0,\phi^{-1}\left(\frac{-\partial_{x} Y_{0,c}^{-}}{\partial_{x}Y_{0,c}^{+}-\partial_{x}Y_{0,c}^{-}}(0,0)\right)\right)$.

The next statement summarizes important features about the dynamics of \eqref{family} close to $\overline{C}$ (see also \cite[Theorem 2.2]{HK23}). We refer to \cite{BST06} for an analogous statement in a more general context.

\begin{proposition}\label{prop-slow-sliding}
The following statements are true.
\begin{enumerate}
    \item The singularities on $\overline{C}$ are normally attracting (resp. repelling) when $x_{2} < 0$ (resp. $x_{2} > 0$).
    \item For each $0 < r_2 \ll 1$, $\tilde\lambda\sim 0$ and $c\sim c_0$, there is a locally invariant manifold of \eqref{family} with foliation by stable fibers (resp. unstable fibers), close to the normally attracting (resp. repelling) branch of $\overline{C}$.
    \item The slow dynamics of \eqref{family} along $\overline{C}$ is given by $\frac{d{x}_{2}}{dt} = X^{sl}_{0,c}(x_2)$, where the sliding vector field $X^{sl}_{0,c}$ is defined in \eqref{slvf}.
\end{enumerate}
\end{proposition}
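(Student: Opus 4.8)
The plan is to regard \eqref{family} as a standard slow-fast system in which $y_2$ is the fast variable, $x_2$ is the slow variable, and $r_2$ plays the role of the singular perturbation parameter; the three statements then become, respectively, a transverse linearization computation, an application of Fenichel theory, and a direct evaluation of the reduced slow flow. For Statement 1 I would write the fast right-hand side as
$$G(x_2,y_2)=Y^{+}_{0,c}(x_2,0)\phi(y_2)+Y^{-}_{0,c}(x_2,0)(1-\phi(y_2)),$$
so that $\overline C=\{G=0\}$, and compute the transverse derivative
$$\partial_{y_2}G=\bigl(Y^{+}_{0,c}(x_2,0)-Y^{-}_{0,c}(x_2,0)\bigr)\phi'(y_2).$$
Since $\phi'>0$ everywhere by \eqref{regular}, the sign of $\partial_{y_2}G$ on $\overline C$ equals that of $(Y^{+}_{0,c}-Y^{-}_{0,c})(x_2,0)$, and the second line of \eqref{A1-1} gives $\partial_{y_2}G<0$ for $x_2<0$ and $\partial_{y_2}G>0$ for $x_2>0$, which is exactly normal attraction (resp.\ repulsion). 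Along the way I would record that the same sign information forces $\tfrac{-Y^{-}_{0,c}}{Y^{+}_{0,c}-Y^{-}_{0,c}}(x_2,0)\in(0,1)$ for $x_2\ne0$, so that $\overline C$ is a genuine smooth graph over the $x_2$-axis away from the contact point $p_0$.

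For Statement 2, once each compact piece of $\overline C$ in $\{x_2<0\}$ (resp.\ $\{x_2>0\}$) is known to be normally hyperbolic and attracting (resp.\ repelling) by Statement 1, the existence of a locally invariant slow manifold together with its stable (resp.\ unstable) Fenichel fibration for $0<r_2\ll1$ is an immediate application of Fenichel's theorem. The smooth dependence of \eqref{family} on the regular parameters $(r_2,\tilde\lambda,c)$ ensures that the manifold and its fibration depend continuously on them.

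For Statement 3 I would pass to the slow time $\tau=r_2^{2}t$, so that $\tfrac{dx_2}{d\tau}$ is $O(1)$, set $r_2=0$, and restrict to $\overline C$, on which the constraint $G=0$ yields $\phi(y_2)=\tfrac{-Y^{-}_{0,c}}{Y^{+}_{0,c}-Y^{-}_{0,c}}(x_2,0)$ and hence $1-\phi(y_2)=\tfrac{Y^{+}_{0,c}}{Y^{+}_{0,c}-Y^{-}_{0,c}}(x_2,0)$. Substituting these into the slow equation and collecting terms gives
$$\frac{dx_2}{d\tau}=\frac{(X^{-}_{0,c}Y^{+}_{0,c}-X^{+}_{0,c}Y^{-}_{0,c})(x_2,0)}{(Y^{+}_{0,c}-Y^{-}_{0,c})(x_2,0)}=\frac{\det Z_{0,c}(x_2)}{(Y^{+}_{0,c}-Y^{-}_{0,c})(x_2,0)}=X^{sl}_{0,c}(x_2),$$
using the definitions \eqref{slvf} and \eqref{slvf-1}, which is the claimed slow dynamics.

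The computations are routine; the one genuine subtlety to flag is that $\overline C$ loses normal hyperbolicity at the contact point $p_0$, where $x_2=0$ and $\overline C$ meets $\overline H$. Consequently Fenichel theory applies on the two branches separately and does not extend across $p_0$, which is precisely the reason the cylindrical blow-up and the companion chart $\{\bar y=-1\}$ are introduced. This does not affect the proposition, whose statements are confined to the normally attracting and repelling branches.
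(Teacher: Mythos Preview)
Your proposal is correct and follows essentially the same approach as the paper: the same transverse linearization $\partial_{y_2}G=(Y^{+}_{0,c}-Y^{-}_{0,c})(x_2,0)\phi'(y_2)$ for Statement~1, a direct appeal to Fenichel for Statement~2, and the standard slow-time reduction with substitution of $\phi(y_2)=\tfrac{-Y^{-}_{0,c}}{Y^{+}_{0,c}-Y^{-}_{0,c}}(x_2,0)$ for Statement~3. Your version is in fact slightly more explicit in the algebra of Statement~3 and in flagging the loss of normal hyperbolicity at $p_0$, but the content is the same.
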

\begin{proof}
For $x_2\ne 0$, each singularity $(x_2,y_2)\in \overline{C}$ is normally hyperbolic with the nontrivial eigenvalue equal to the divergence of the vector field \eqref{family-0}, computed in $(x_2,y_2)$, that is,
$$\left(Y^{+}_{0,c}(x_2,0) - Y^{-}_{0,c}(x_2,0)\right)\phi'(y_2).$$

From \eqref{regular} it follows that $\phi'(y_2)>0$, and the sign of the eigenvalue therefore depends on the sign of $Y^{+}_{0,c}(x_2,0) - Y^{-}_{0,c}(x_2,0)$. Using the assumption (A1), or equivalently \eqref{A1-1}, we have that $\overline{C}$ is normally attracting (resp. repelling) when $x_{2} < 0$ (resp. $x_{2} > 0$). This shows that Statement $1$ is true. Statement $2$ is a direct consequence of  Fenichel’s theory \cite{Fen79}.

Finally, we prove Statement $3$. Invariant manifolds from Statement 2 of Proposition \ref{prop-slow-sliding} are asymptotic to the curve of singular points $\overline{C}$, and we can write
$$y_{2} = \phi^{-1}\left(\frac{-Y^{-}_{0,c}}{Y^{+}_{0,c}-Y^{-}_{0,c}}(x_2,0)\right) + O(r_{2}).$$
%For simplicity sake, denote the ordinate of the point $p_{0}$ by $\Phi$. For $r_{2} \ll 1$, the asymptotic expansion with respect to $r_{2}$ of the expression of the curve of singularities has the form
%$$y_{2} = \Phi + r_{2}f_{1}(x_{2}) + O(r^{2}_{2}),$$
%for some smooth function $f_{1}(x_{2})$.

If we reduce the dynamics of \eqref{family} to the invariant manifolds, divide out $r_{2}^{2}$ and finally let $r_{2}\rightarrow 0$, we get the slow dynamics \cite[Chapter 3]{DDR21} of \eqref{family} along $\overline{C}$ (away from $p_0$) which is exactly the flow of the sliding vector field $X^{sl}_{0,c}(x_2)$. This completes the proof of Statement 3.
\end{proof}

\begin{remark}\label{remark-bitno} {\rm Statement 3 of Proposition \ref{prop-slow-sliding} and the assumption (A2) (see also Section \ref{subsection-assump}) imply that for $c=c_0$ the slow dynamics of \eqref{family} along the portion of $\overline{C}$ parameterized by $x_2\in [\eta_{-},\eta_{+}]$ is positive, except at a finite number of singularities of the slow dynamics located in $[\eta_{-},\eta_{+}]\setminus\{0\}$ and with finite multiplicity (each singularity located in the interior $(\eta_{-},\eta_{+})$ has even multiplicity). Notice that the slow dynamics is regularly extended through the origin $x_2=0$. This brings the slow-fast system \eqref{family} into the framework of \cite{DD08}. In Appendix \ref{appendix-extra-blowup} we show that the assumptions in \cite{DD08} near the intersection point $p_0$ are also satisfied. We use a cylindrical blow-up along the curve of singularities $\overline{H}$.}
\end{remark}

If the slow dynamics (or, equivalently, the sliding vector field) is regular along the attracting segment $[\eta_-,0]\subset \overline C$, for $c=c_0$, then we can define the associated slow divergence integral \cite[Chapter 5]{DDR21}
\begin{equation}\label{slowdivint-attracting}
    I_-(x,c)=\int_{x}^{0}\frac{(Y_{0,c}^{+}-Y_{0,c}^{-})^{2}(s,0)}{\det Z_{0,c}(s)}\phi'\left(\phi^{-1}\left(\frac{-Y_{0,c}^{-}}{Y_{0,c}^{+}-Y_{0,c}^{-}}(s,0)\right)\right)ds<0,
\end{equation}
with $x\sim\eta_{-}$ and $c\sim c_{0}$. This is the integral of the divergence of the vector field \eqref{family-0} calculated at points of the singular curve $\overline{C}$, where the integration variable is the time variable of the flow of the slow vector field (which is $dt=\frac{ds}{X^{sl}_{0,c}(s)}$).

Similarly, if the slow dynamics is regular along the repelling segment $[0,\eta_+]\subset \overline C$, then slow divergence integral is well defined
\begin{equation}\label{slowdivint-repelling}
    I_+(x,c)=\int_{x}^{0}\frac{(Y_{0,c}^{+}-Y_{0,c}^{-})^{2}(s,0)}{\det Z_{0,c}(s)}\phi'\left(\phi^{-1}\left(\frac{-Y_{0,c}^{-}}{Y_{0,c}^{+}-Y_{0,c}^{-}}(s,0)\right)\right)ds<0,
\end{equation}
with $x\sim\eta_{+}$ and $c\sim c_{0}$. It is clear that
$$I(x,c)=I_-(\Pi_c(x),c)-I_+(x,c),$$
where $I$ is defined in \eqref{SDI-statement}.

% Due to Proposition \ref{prop-slow-sliding}, we know that in the singular limit $r_{2} = 0$ the slow dynamics is exactly given by the sliding vector field, and this is a key fact when we deal with slow divergence integrals and other tools from Geometric Singular Perturbation Theory.

%In order to study the dynamics near the intersection $p_0$ and check whether the assumptions in \cite{DD08} are satisfied, we should blow-up along the curve of singularities $\overline{H}$ of the vector field \eqref{family} on the chart $\{\bar{\epsilon}=1\}$ and bring system \eqref{PWS} into the framework of \cite{DD08}. We do this in Appendix \ref{appendix-extra-blowup}. Note that in \cite{DD08} it admits the existence of singularities in the vector field $X_{0,c}^{sl}(x)$. In Figure \ref{FIG4}, we sketch the case which the sliding vector field $X_{0,c_0}^{sl}(x)$ has a singularity at $x=\eta_{-}$.

\subsubsection{Dynamics in the chart $\{\bar{y}=-1\}$}\label{subsec-y1}
The transformation takes the form $(x,y,\epsilon)=(x_{1},-r_{1}^2, r_{1}\epsilon_{1})$, where we use new coordinates $(x_1,r_1,\epsilon_{1})$ to distinguish the coordinates of the chart $\{\bar{\epsilon}=1\}$. We bring the transformation into system \eqref{extended}, which becomes (after multiplication by $r_{1}^{2}$)
\begin{equation}\label{phase}
\begin{array}{rcl}
\dot{x}_1 & = & r_{1}^{2}\left(X^{+}_{r_{1}\epsilon_{1} \tilde \lambda, c}(x_1,-r_{1}^{2})\phi_{-}(\epsilon_{1}^{2})+X^{-}_{r_{1}\epsilon_{1} \tilde \lambda, c}(x_1,-r_{1}^{2})(1-\phi_{-}(\epsilon_{1}^{2}))\right), \\[3mm]
\dot{r}_1 & = & -\frac{1}{2}r_{1}\left(Y^{+}_{r_{1}\epsilon_{1} \tilde \lambda, c}(x_1,-r_{1}^{2})\phi_{-}(\epsilon_{1}^{2})+Y^{-}_{r_{1}\epsilon_{1} \tilde \lambda, c}(x_1,-r_{1}^{2})(1-\phi_{-}(\epsilon_{1}^{2}))\right),  \\[3mm]
\dot{\epsilon}_1 & = & \frac{1}{2}\epsilon_{1}\left(Y^{+}_{r_{1}\epsilon_{1} \tilde \lambda, c}(x_1,-r_{1}^{2})\phi_{-}(\epsilon_{1}^{2})+Y^{-}_{r_{1}\epsilon_{1} \tilde \lambda, c}(x_1,-r_{1}^{2})(1-\phi_{-}(\epsilon_{1}^{2}))\right),
\end{array}
\end{equation}
where $\phi_{-}$ is defined in \eqref{smooth}. It is easily shown that the $x_{1}$-axis (which corresponds to the edge of the cylinder) is a curve of singularities whose eigenvalues are given by $(0,-\frac{Y^-_{0,c}(x_1,0)}{2},\frac{Y^-_{0,c}(x_1,0)}{2})$, for any singularity $(x_1,0,0)$. We can refer to \cite[Section 3.1.2]{HK23} for more details.

\subsection{Transition maps and their expressions}\label{section-transition-maps}

\subsubsection{The difference map}\label{subsection-DM}
In order to study the cyclicity of the sliding cycle $\Gamma$ (see Definition \ref{def-cyclicity} in Section \ref{sec2.2}), we first define the associated difference map by following the trajectories of the blown-up vector field $\widehat{Z}$ in forward and backward time.

Assumption (A3) in Section \ref{section-intro} guarantees the existence of a point $(0,s_0)\in\{x = 0\}$, which is the intersection between the line $\{x = 0\}$ and the regular orbit of $Z_{0,c_0}^{-}$ connecting $(\eta_{+},0)$ and $(\eta_{-},0)$. Consider a sufficiently small segment $[\sigma_1,\sigma_2]$ containing $s_0$ in its interior $(\sigma_1,\sigma_2)$. Define a section $S_0\subset\{x=0\}$ parametrized by $\epsilon\in[0,\epsilon_0]$ and $y\in[\sigma_1,\sigma_2]$ where $\epsilon_0>0$ is small enough (see Figure \ref{FIG4}).
In the chart $\{\bar{\epsilon} = 1\}$ studied in Section \ref{subsec-e1}, we define another section $S_3\subset \{x_2=0\}$ (see Figure \ref{FIG4}), parametrized by $r_2=\epsilon\in[0,\epsilon_0]$ and $y_2\in[\upsilon_1,\upsilon_2]$, where $[\upsilon_1,\upsilon_2]$ is a segment that contains the point $y_2=\phi^{-1}\left(\frac{-\partial_{x} Y_{0,c_0}^{-}}{\partial_{x}Y_{0,c_0}^{+}-\partial_{x}Y_{0,c_0}^{-}}(0,0)\right)$ in its interior $(\upsilon_1,\upsilon_2)$.

Finally, we define the difference map
\begin{equation*}
\Delta(y,\epsilon,\tilde{\lambda},c)=\Delta_{+}(y,\epsilon,\tilde{\lambda},c) - \Delta_{-}(y,\epsilon,\tilde{\lambda},c),
\end{equation*}
for $\epsilon\in(0,\epsilon_0], \ \tilde{\lambda}\sim 0$ and $c\sim c_0$, where $y_2=\Delta_{-}(y,\epsilon,\tilde{\lambda},c)$ (resp. $y_2=\Delta_{+}(y,\epsilon,\tilde{\lambda},c)$) is the transition map between $S_0$ and $S_3$ defined by following the trajectories of the blown-up vector field $\widehat{Z}$ in forward (resp. backward) time. It is clear that the zeros of the difference map $\Delta$ correspond to periodic orbits of system \eqref{PWSRescaled}, Hausdorff close to $\Gamma$. It will be more convenient to study the zeros of the derivative of $\Delta$ w.r.t. $y$ and use Rolle's Theorem (see later sections).

For our purposes, we will write $\Delta_{-} = \Delta_{23}\circ \Delta_{12} \circ \Delta_{01}$, in which the functions $\Delta_{01}$, $\Delta_{12}$ and $\Delta_{23}$ will be defined and studied in Sections \ref{subsection-del01}--\ref{subsection-Del23}. In Section \ref{subsection-combi} we combine the local results from Sections \ref{subsection-del01}--\ref{subsection-Del23} and obtain an expression for the derivative of $\Delta_-$ w.r.t. $y$. The transition map $\Delta_{+}$ can be treated in a similar fashion.

\subsubsection{Transition map $\Delta_{01}$}\label{subsection-del01}
Consider the chart $\{\bar{y} = -1\}$ with coordinates $(x_1,r_1,\epsilon_1)$ studied in Section \ref{subsec-y1}. For a sufficiently small $r_{10} > 0$, define a section $S_{1}\subset\{r_{1}=r_{10}\}$ parametrized by $x = x_1\in[\eta_1,\eta_2]$ and $\epsilon_1\in[0,\frac{\epsilon_0}{r_{10}}]$. We assume that the orbit of $Z_{0,c_0}^{-}$ connecting $(\eta_{+},0)$ and $(\eta_{-},0)$ intersects $S_1$ for some $x\in(\eta_1,\eta_2)$ (see Figure \ref{FIG4}). Denote by $x=\Delta_{01}(y,\epsilon,\tilde{\lambda},c)$ the transition map from $S_0$ (see Section \ref{subsection-DM}) to $S_1$. It is clear that $\epsilon_1=\frac{\epsilon}{r_{10}}$.

\begin{lemma}\label{L1}
The transition map $\Delta_{01}$ is smooth with respect to $(y,\epsilon,\tilde{\lambda},c)$ and
\begin{equation}\label{map01}
\frac{\partial \Delta_{01}}{\partial y}(y,\epsilon,\tilde{\lambda},c)>0.
\end{equation}
%where $A(y,\epsilon,\tilde{\lambda},c)$ is positive and smooth in $(y,\epsilon,\tilde{\lambda},c)$, $\int_{O(s_0,s_1)}\divergenceOperator Z_{\lambda,c}^{-}dt$ is $O(1)$ and $O(s_0,s_1)$ is an orbit of the extended vector filed \eqref{extended} with $s_0\in S_0$ and $s_1\in S_1$.
\end{lemma}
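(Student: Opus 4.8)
The plan is to read $\Delta_{01}$ as a classical transition map between two sections transverse to a regular orbit, and to extract the sign in \eqref{map01} from a variational (Wronskian) computation. First I would observe that the whole trajectory realizing $\Delta_{01}$ stays in the region $\{y\le -r_{10}^2\}$, which is bounded away from the switching manifold $\Sigma=\{y=0\}$: the section $S_1$ sits at $y=-r_{10}^2<0$, and by the geometry of the connecting orbit (which meets $\{x=0\}$ at a depth $s_0<-r_{10}^2$) the arc from $S_0$ to $S_1$ never rises above the level $y=-r_{10}^2$. On this region the blow-down $\Phi$ is a diffeomorphism for $\epsilon>0$, and the factor $r^2>0$ in $\widehat Z=r^2\overline Z$ only rescales time positively; hence $\Delta_{01}$ coincides orbit-by-orbit with the transition map of the planar field $Z_{\epsilon,\tilde\lambda,c}$ from $\{x=0\}$ to $\{y=-r_{10}^2\}$. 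By (A4), on this compact region $Z_{\epsilon,\tilde\lambda,c}=Z^-_{\epsilon\tilde\lambda,c}+o(1)$ in the $\epsilon$-regularly $C^k$ sense, so at $(\epsilon,\tilde\lambda,c)=(0,0,c_0)$ the relevant orbit is exactly the arc of the connecting orbit of $Z^-_{0,c_0}$ issued from $(0,s_0)$ toward $(\eta_-,0)$.

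For smoothness I would verify transversality of the two sections to the flow: on $S_0=\{x=0\}$ the $x$-component of the field is $X^-_{0,c}(0,s_0)+o(1)$, which is nonzero because $X^-_{0,c}(0,0)<0$ by \eqref{A1-1}; on $S_1=\{y=-r_{10}^2\}$ the $y$-component at the landing point near $(\eta_-,0)$ is $Y^-_{0,c}(\eta_-,0)+o(1)$, which is nonzero since $Y^-_{0,c}(x,0)>0$ for $x<0$. Transversality together with the smooth (and, by (A4), $\epsilon$-regularly smooth up to $\epsilon=0$) dependence of solutions on initial conditions and parameters yields that $\Delta_{01}$ is $\epsilon$-regularly $C^k$ in $(y,\epsilon,\tilde\lambda,c)$.

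To prove \eqref{map01} I would parametrize by the flow $\psi_t$ of $Z_{\epsilon,\tilde\lambda,c}$, with $\pi_x,\pi_y$ the coordinate projections: for a start $(0,y)\in S_0$ let $\tau=\tau(y)$ solve $\pi_y\psi_\tau(0,y)=-r_{10}^2$, so that $\Delta_{01}=\pi_x\psi_{\tau(y)}(0,y)$. Implicit differentiation of the constraint and of the map gives
$$\frac{\partial\Delta_{01}}{\partial y}=\frac{\dot y\,\partial_y X-\dot x\,\partial_y Y}{\dot y}\bigg|_{S_1},$$
where $(\dot x,\dot y)$ is the velocity along the orbit and $(\partial_y X,\partial_y Y)$ is the solution of the variational equation with initial value $(0,1)$ at $S_0$. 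The numerator $W:=\dot y\,\partial_y X-\dot x\,\partial_y Y$ is the Wronskian of these two solutions of the linearized flow; at $S_0$ it equals $-\dot x|_{S_0}=-X^-_{0,c}(0,s_0)+o(1)>0$, and Liouville's formula $W(\tau)=W(0)\exp\!\big(\int_0^\tau\divergenceOperator(Z_{\epsilon,\tilde\lambda,c})\,dt\big)$ keeps $W>0$ along the whole arc. Since the denominator $\dot y|_{S_1}=Y^-_{0,c}(\eta_-,0)+o(1)>0$, the ratio is positive at $\epsilon=0$, and hence, by $\epsilon$-regularity together with the fact that it never vanishes, for all $\epsilon\in(0,\epsilon_0]$ after shrinking $\epsilon_0$. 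The computation itself is routine; the only genuine care is the sign bookkeeping — guaranteeing the two inputs $X^-_{0,c}(0,s_0)<0$ and $Y^-_{0,c}(\eta_-,0)>0$ from (A1) and the location of $s_0$ and of the landing point — and the upgrade from the limiting value at $\epsilon=0$ to uniform positivity on $(0,\epsilon_0]$, which is precisely where the $\epsilon$-regular smoothness furnished by (A4) is used.
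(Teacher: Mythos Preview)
Your argument is correct in substance and considerably more explicit than the paper's. The paper dispatches the lemma in two sentences: smoothness follows from regularity of $Z^-_{\epsilon\tilde\lambda,c}$ between $S_0$ and $S_1$ (with a reference to \cite[Proposition~3.2]{HK23}), and the sign of $\partial_y\Delta_{01}$ follows from ``the chosen parametrization of $S_0$ and $S_1$''. You instead carry out the variational/Wronskian computation by hand, which is a perfectly legitimate route and in fact anticipates Lemma~\ref{Lemma3} (the divergence-integral formula for derivatives of transition maps) that the paper invokes later for $\Delta_{23}$. What you gain is a self-contained verification of the sign without appealing to an orientation convention; what the paper gains is brevity, since a transition map for a regular planar flow between transverse sections is automatically a local diffeomorphism, and the orientation is then determined by how the sections are parametrized.

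One point in your write-up needs correction: you justify $X^-_{0,c}(0,s_0)<0$ by citing $X^-_{0,c}(0,0)<0$ from \eqref{A1-1}, but $s_0$ is not close to $0$, so \eqref{A1-1} says nothing about the sign at $(0,s_0)$. The correct reason is geometric: by (A3) the regular orbit of $Z^-_{0,c_0}$ passes from $(\eta_+,0)$ with $\eta_+>0$ to $(\eta_-,0)$ with $\eta_-<0$, hence at its intersection $(0,s_0)$ with $\{x=0\}$ the $x$-component of the velocity is negative. With this fix your Wronskian computation goes through as written: $W(0)=-\dot x|_{S_0}>0$, Liouville preserves the sign, and $\dot y|_{S_1}=Y^-_{0,c}(\eta_-,0)+o(1)>0$ by the second line of \eqref{A1-1}, giving $\partial_y\Delta_{01}>0$.
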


\begin{proof} The smoothness of $\Delta_{01}$ follows directly from \eqref{phase} and the fact that the vector field $Z^{-}_{\epsilon\tilde\lambda,c}$ is regular between $S_0$ and $S_1$ (see Assumption (A3) in Section \ref{section-intro}). See also \cite[Proposition 3.2]{HK23}. The chosen parametrization of $S_0$ and $S_1$ implies \eqref{map01}.
%We can work the transition map $\Delta_{01}$ in $Z_{\lambda,c}^{-}$ for any fixed $\tilde{\epsilon}\in(0,\epsilon_0]$, since the plane $\{\epsilon=\tilde{\epsilon}\}$ is invariant for the extended vector field \eqref{extended}. By lemma \ref{Lemma3}, we have the expression \eqref{map01}, where
%$$A(y,\epsilon,\tilde{\lambda},c)=\frac{-X_{0,c_0}^{-}(0,s_{0})}{Y_{0,c_0}^{-}(\eta_{-},0)}+o(1)$$
%is positive and finite. Since the orbit $O(s_0,s_1)$ is regular without involving singularities of $Z_{\lambda,c}^{-}$,
%we therefore can conclude that the transition time from $s_0\in S_0$ to $s_1\in S_1$ is $T=O(1)$ and $\divergenceOperator Z_{\lambda,c}^{-}=O(1).$ This implies
%$$\int_{O(s_0,s_1)}\divergenceOperator Z_{\lambda}^{-}dt=O(1).$$
\end{proof}

\subsubsection{Transition map $\Delta_{12}$}\label{subsection-Del12}

We still work in the chart $\{\bar{y} = -1\}$ with coordinates $(x_1,r_1,\epsilon_1)$, as studied in Section \ref{subsec-y1}. For a small positive constant $\epsilon_{10}$, define a section $S_2\subset\{\epsilon_{1}=\epsilon_{10}\}$, parametrized by $x=x_1\in[\bar\eta_1,\bar\eta_2]$ and $r_{1}\in[0,\frac{\epsilon_0}{\epsilon_{10}}]$, with $\eta_{-}\in(\bar \eta_1,\bar \eta_2)$ (see Figure \ref{FIG4}). We denote by $\Delta_{12}(x,\epsilon_1,\tilde{\lambda},c)$ the $x_1$-component of the transition map from $S_1$ (see Section \ref{subsection-del01}) to $S_2$, following the trajectories of system \eqref{phase} inside the level $\{r_1\epsilon_1 = \epsilon\}$ for any fixed $\epsilon\in(0,\epsilon_0]$. Clearly, the $r_1$-component of the transition map is given by $r_1=\frac{\epsilon}{\epsilon_{10}}=\frac{r_{10}\epsilon_1}{\epsilon_{10}}$, where $r_{10}$ is defined in Section \ref{subsection-del01}. A proof of the next result can be found in \cite[Appendix B]{HK23}.

\begin{lemma}\label{L2}
The transition map $\Delta_{12}$ is given by
$$\Delta_{12}\left(x,\frac{\epsilon}{r_{10}},\tilde{\lambda},c\right)=g_{12}(x,\tilde{\lambda},c)+O(\epsilon\ln\epsilon^{-1})$$ as $\epsilon\rightarrow 0$, where both $g_{12}(x,\tilde{\lambda},c)$ and $O(\epsilon\ln\epsilon^{-1})$ are smooth in $(x,\tilde{\lambda},c)$. This expression can be differentiated with respect to $x,\tilde{\lambda},c$ without changing the order of the remainder. In particular,
$$g_{12}\left(\Delta_{01}(s_0,0,0,c_0),0,c_0\right)=\eta_{-} \quad \text{and}  \quad \frac{\partial g_{12}}{\partial x}(x,\tilde{\lambda},c)>0,$$
with $s_0$ defined in Section \ref{subsection-DM}.
\end{lemma}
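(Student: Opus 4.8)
The plan is to read $\Delta_{12}$ as a Dulac-type passage map near the line of normally hyperbolic singularities $\{r_1=\epsilon_1=0\}$ (the $x_1$-axis) of system \eqref{phase}, exploiting the invariant product $r_1\epsilon_1$. First I would record the conserved quantity: from \eqref{phase}, $\frac{d}{dt}\ln r_1=-\frac12 Y=-\frac{d}{dt}\ln\epsilon_1$, where $Y$ denotes the common expression $Y^{+}_{r_1\epsilon_1\tilde\lambda,c}(x_1,-r_1^2)\phi_-(\epsilon_1^2)+Y^{-}_{r_1\epsilon_1\tilde\lambda,c}(x_1,-r_1^2)(1-\phi_-(\epsilon_1^2))$. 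Hence $r_1\epsilon_1$ is constant along orbits and equals $\epsilon$ for the chosen parametrization, so for each fixed $\epsilon>0$ the trajectory lives on the two-dimensional invariant surface $\{r_1\epsilon_1=\epsilon\}$, entering $S_1$ at $(r_1,\epsilon_1)=(r_{10},\epsilon/r_{10})$ and leaving $S_2$ at $(r_1,\epsilon_1)=(\epsilon/\epsilon_{10},\epsilon_{10})$. On the axis the eigenvalues are $(0,-\tfrac12 Y^-_{0,c}(x_1,0),\tfrac12 Y^-_{0,c}(x_1,0))$; since $\eta_-<0$ and $Y^-_{0,c}(x_1,0)>0$ for $x_1<0$ by \eqref{A1-1}, the axis is normally hyperbolic of saddle type near $x_1=\eta_-$, attracting in $r_1$ and repelling in $\epsilon_1$. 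This is exactly the geometry that forces the entry value $\epsilon_1=\epsilon/r_{10}$ to grow to $\epsilon_{10}$ while $r_1$ contracts, so $\Delta_{12}$ is well defined for small $\epsilon>0$.

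Second I would identify the leading term $g_{12}$ from the singular limit $\epsilon=0$. Setting $\epsilon_1=0$ in \eqref{phase} and using $\phi_-(0)=0$ reduces the flow to $\dot x_1=r_1^2X^-_{0,c}(x_1,-r_1^2)$, $\dot r_1=-\frac12 r_1 Y^-_{0,c}(x_1,-r_1^2)$, which is precisely $Z^-_{0,c}$ in the coordinates $(x,y)=(x_1,-r_1^2)$. Thus the singular orbit follows the $Z^-_{0,c}$-orbit through the entry point of $S_1$ down to the switching line $\{r_1=0\}=\{y=0\}$, arriving at a value $x_1=g_{12}(x,\tilde\lambda,c)$, and then slides up the invariant axis $\{r_1=0\}$ (on which $\dot x_1=0$) until $\epsilon_1=\epsilon_{10}$. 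Hence $g_{12}(\cdot,\tilde\lambda,c)$ is the $Z^-_{0,c}$-transition map from $S_1$ to $\{y=0\}$; its smoothness in $(x,\tilde\lambda,c)$ and the inequality $\frac{\partial g_{12}}{\partial x}>0$ follow from the regular, transverse, non-crossing dependence of $Z^-_{0,c}$-orbits on initial conditions near $\eta_-$, and the normalization $g_{12}(\Delta_{01}(s_0,0,0,c_0),0,c_0)=\eta_-$ is exactly assumption (A3): the connecting orbit crossing $S_1$ at $x=\Delta_{01}(s_0,0,0,c_0)$ meets $\{y=0\}$ at $(\eta_-,0)$.

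Third, for $\epsilon>0$ I would establish the expansion together with its $\epsilon$-regular $C^k$ character. Near the saddle axis I would use a $C^k$ normalizing change of coordinates (center-manifold reduction along the axis together with hyperbolic normalization, depending smoothly on $(\tilde\lambda,c)$) bringing the hyperbolic part to linear form; in these coordinates the passage across the corner is integrated explicitly using $r_1\epsilon_1=\epsilon$. The transition time scales like $\ln\epsilon^{-1}$, whereas the $x_1$-drift, governed by $\dot x_1=O(r_1^2)$, is dominated by the incoming branch where it reproduces $g_{12}$; the remaining contribution from the long corner passage is of order $\epsilon\ln\epsilon^{-1}$, and smooth parameter-dependence of the normalizing transformation yields that the remainder and all its $(x,\tilde\lambda,c)$-derivatives are again $O(\epsilon\ln\epsilon^{-1})$. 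I expect the control of this remainder — pinning down the precise order $\epsilon\ln\epsilon^{-1}$ and, above all, showing it is preserved under differentiation in $(x,\tilde\lambda,c)$ — to be the main obstacle, since it requires uniform estimates on the accumulated drift and on the parameter-dependence of the normalizing coordinates throughout the passage. This is precisely the technical computation carried out in \cite[Appendix B]{HK23}, which I would follow.
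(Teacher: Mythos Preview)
Your proposal is correct and follows essentially the same approach as the paper: the paper does not prove Lemma~\ref{L2} at all but simply refers to \cite[Appendix B]{HK23}, and your sketch (conserved product $r_1\epsilon_1=\epsilon$, normally hyperbolic saddle axis with the sign coming from \eqref{A1-1}, identification of $g_{12}$ as the $Z^-_{0,c}$-transition in the invariant face $\{\epsilon_1=0\}$, then Dulac-type passage giving the $O(\epsilon\ln\epsilon^{-1})$ remainder) is precisely the argument carried out there. The only minor point is that in the singular face $\{\epsilon_1=0\}$ the $\tilde\lambda$-dependence drops out (since it enters through $r_1\epsilon_1\tilde\lambda$), so $g_{12}$ is in fact independent of $\tilde\lambda$, consistent with but slightly sharper than the lemma's statement.
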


\begin{remark}
{\rm Notice that $\frac{\partial g_{12}}{\partial x}$ is positive due to the chosen parametrization of $S_1$ and $S_2$.}
\end{remark}

\subsubsection{Transition map $\Delta_{23}$}\label{subsection-Del23}
In this section we study the $y_2$-component $y_2=\Delta_{23}(x,\epsilon,\tilde\lambda,c)$ of the transition map from $S_2$ (defined in Section \ref{subsection-Del12}) to $S_3$ (defined in Section \ref{subsection-DM}) w.r.t. the slow-fast system \eqref{family}. We denote \eqref{family} by $\widehat Z_{\epsilon,\tilde\lambda,c}$ where we use $\epsilon=r_2$ and write $\epsilon$ instead of $r_2$ (see Section \ref{subsec-e1}).

%A function $f(\xi,\epsilon)$ is said to be $\epsilon-$regularly $C^k$ in $\xi$ if $f$ is continuous in $(\xi,\epsilon)$, including $\epsilon=0$, and all partial derivatives of $f$ w.r.t. $\xi$ up to order $k$ exist and are continuous in $(\xi, \epsilon)$, including $\epsilon = 0$.

The following lemma gives a well-known connection between the divergence integral along orbits and transition maps for planar vector fields (see e.g. \cite[Theorem 5.1]{DDR21}).

\begin{lemma}\label{Lemma3}
Let $X=(X_1,X_2)$ be a vector field on $\mathbb{R}^2$. Let $\bar S_1$, $\bar S_2$ be two sections of $\mathbb{R}^2$, transverse to the flow of $X$. Assume $p\in \bar S_1$, $q\in \bar S_2$ and the orbit through $p$ reaches $q$ in finite time. Let $T:\tilde S_1\subset \bar S_1\rightarrow \bar S_2$ be the transition map defined in a neighborhood of $p$. If $\Psi_{i}=(\Psi_{i,x},\Psi_{i,y}):U_{i}\rightarrow \bar S_{i}$  are coordinates for $\bar S_{i}$ with $U_{i}\subset \mathbb{R}$, then
$$(\Psi_{2}^{-1}\circ T \circ\Psi_{1})'(s_1)=\frac{\begin{vmatrix} \Psi_{1,x}'(s_1) & X_1(\Psi_{1}(s_1))  \\ \Psi_{1,y}'(s_1) & X_2(\Psi_{1}(s_1))  \end{vmatrix} }{\begin{vmatrix} \Psi_{2,x}'(s_2) & X_1(\Psi_{2}(s_2))  \\ \Psi_{2,y}'(s_2) & X_2(\Psi_{2}(s_2))  \end{vmatrix}  }\cdot\exp\left( \int_{\mathcal{O}(p,q)}\divergenceOperator Xdt\right),$$
where $s_1=\Psi_{1}^{-1}(p)$, $s_2=\Psi_{2}^{-1}(q).$ The integral is taken over the orbit $\mathcal{O}(p,q)$ from $p$ to $q$ parameterized by the time.
\end{lemma}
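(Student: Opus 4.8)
The plan is to realize the transition map as the composition of the flow with a transit-time function and then to reduce everything to two classical facts: that a vector field is invariant under its own flow, and that the Jacobian of the flow obeys Liouville's formula. Let $\varphi_t$ denote the local flow of $X$. Since $\bar S_2$ is transverse to $X$ at $q$ and the orbit from $p$ reaches $q$ in finite time, the implicit function theorem produces a smooth transit-time function $\tau(s)$, defined for $s$ near $s_1$, with $\varphi_{\tau(s)}(\Psi_1(s))\in\bar S_2$. Writing $F:=\Psi_2^{-1}\circ T\circ\Psi_1$, this gives the identity $\varphi_{\tau(s)}(\Psi_1(s))=\Psi_2(F(s))$, valid near $s_1$.

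First I would differentiate this identity at $s=s_1$, setting $\tau=\tau(s_1)$. The left-hand side yields $X(q)\,\tau'(s_1)+D\varphi_{\tau}(p)\,\Psi_1'(s_1)$ and the right-hand side yields $\Psi_2'(s_2)\,F'(s_1)$. To eliminate the unknown $\tau'(s_1)$, I would take the planar wedge of both sides with $X(q)$, writing $u\wedge v:=u_xv_y-u_yv_x$ (so that, with $u=\Psi_i'(s_i)$ and $v=X(\Psi_i(s_i))$, the quantity $u\wedge v$ is exactly the $2\times2$ determinant appearing in the statement). Because $X(q)\wedge X(q)=0$, the transit-time term disappears and one is left with
$$F'(s_1)\,\bigl(\Psi_2'(s_2)\wedge X(q)\bigr)=\bigl(D\varphi_{\tau}(p)\,\Psi_1'(s_1)\bigr)\wedge X(q).$$

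The heart of the argument is rewriting the right-hand side. I would use the infinitesimal invariance of the field under its flow, $D\varphi_{\tau}(p)\,X(p)=X(\varphi_\tau(p))=X(q)$ (obtained by differentiating $\varphi_t\circ\varphi_s=\varphi_{t+s}$ in $s$ at $s=0$), to replace $X(q)$ by $D\varphi_\tau(p)X(p)$, and then the elementary identity $(Au)\wedge(Av)=\det(A)\,(u\wedge v)$ valid for any linear $A$ on $\R^2$. This converts the right-hand side into $\det\bigl(D\varphi_\tau(p)\bigr)\,\bigl(\Psi_1'(s_1)\wedge X(p)\bigr)$. Liouville's formula then gives $\det D\varphi_\tau(p)=\exp\bigl(\int_{\mathcal O(p,q)}\divergenceOperator X\,dt\bigr)$, and dividing by the factor $\Psi_2'(s_2)\wedge X(q)$, which is nonzero precisely because $\bar S_2$ is transverse to $X$ at $q$, produces the claimed formula with $p=\Psi_1(s_1)$ and $q=\Psi_2(s_2)$.

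The computation is essentially formal once the identities are lined up, so there is no deep obstacle; the points that require genuine care are the smoothness and well-definedness of $\tau(s)$ (resting on transversality of $\bar S_2$ and finiteness of the transit time) and the sign and orientation bookkeeping in the wedge products, so that the numerator and denominator determinants emerge exactly as written. I expect the main substantive step—rather than a true difficulty—to be the coupling of infinitesimal flow-invariance with the determinant identity $(Au)\wedge(Av)=\det(A)(u\wedge v)$, since this is what pulls the pushed-forward tangent vector back onto the original section data and manufactures the divergence integral through Liouville's formula.
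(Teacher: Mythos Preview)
Your argument is correct and is the standard proof of this classical identity. The paper, however, does not supply its own proof of this lemma: it simply states the result and cites \cite[Theorem 5.1]{DDR21} as a reference, treating it as well known. So there is nothing to compare against beyond noting that your flow-plus-Liouville derivation is precisely the kind of argument one finds in that reference.
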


Before we study the derivative of the transition map $\Delta_{23}$, define the integral
\begin{equation}\label{eq-divergence-intg}
\widetilde{I}_{-}(x,\epsilon,\tilde\lambda,c) :=
 \epsilon^2\int_{\mathcal{O_{-}}(x,\epsilon,\tilde\lambda,c)} \divergenceOperator \widehat Z_{\epsilon,\tilde\lambda,c}dt,
\end{equation}
where $\mathcal{O_{-}}(x,\epsilon,\tilde\lambda,c)$ is the orbit of the vector field $\widehat Z_{\epsilon,\tilde\lambda,c}$ (which is the slow-fast system \eqref{family}) through the point $p = (x, -\frac{1}{\epsilon_{10}^2})\in S_2$ until it hits the section $S_{3}$ at $q = (0,\Delta_{23}(x,\epsilon,\tilde\lambda,c))$ (following the notation of Lemma \ref{Lemma3}). It is important to remark that the integral \eqref{eq-divergence-intg} and the slow divergence integral \eqref{slowdivint-attracting} are different.

\begin{lemma}\label{L4}
The derivative of the transition map $\Delta_{23}$ w.r.t. $x$ is given by
\begin{equation}
\frac{\partial \Delta_{23}}{\partial x}(x,\epsilon,\tilde\lambda,c)=-\exp\frac{1}{\epsilon^2}\left(\widetilde{I}_{-}(x,\epsilon,\tilde\lambda,c) + o(1)\right),
\end{equation}
where $o(1)$ is $\epsilon-$regularly $C^{k}$ in $(x,\tilde{\lambda},c)$ and tends to $0$ as $\epsilon\to 0$, and $\widetilde{I}_{-}(x,\epsilon,\tilde\lambda,c)$ is the integral \eqref{eq-divergence-intg}.
\end{lemma}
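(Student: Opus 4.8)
The plan is to apply the connection formula of Lemma \ref{Lemma3} directly to the planar slow-fast field $\widehat{Z}_{\epsilon,\tilde\lambda,c}=(X_1,X_2)$ given by \eqref{family} (with $r_2=\epsilon$ kept fixed as a parameter), and then to absorb the resulting determinant prefactor into the exponential of the divergence integral. First I would record the two sections in the $(x_2,y_2)$-coordinates of the chart $\{\bar\epsilon=1\}$. Via the chart transition $y_2=-1/\epsilon_1^2$, the section $S_2=\{\epsilon_1=\epsilon_{10}\}$ becomes the horizontal line $\{y_2=-1/\epsilon_{10}^2\}$, which I parametrize by $\Psi_1(s)=(s,-1/\epsilon_{10}^2)$, while $S_3\subset\{x_2=0\}$ is parametrized by $\Psi_2(s)=(0,s)$. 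Recalling that $X_1$ carries the factor $\epsilon^2$, Lemma \ref{Lemma3} applied at $p=(x,-1/\epsilon_{10}^2)\in S_2$ and $q=(0,\Delta_{23}(x,\epsilon,\tilde\lambda,c))\in S_3$ gives
$$\frac{\partial \Delta_{23}}{\partial x}=\frac{\begin{vmatrix}1 & X_1(p)\\ 0 & X_2(p)\end{vmatrix}}{\begin{vmatrix}0 & X_1(q)\\ 1 & X_2(q)\end{vmatrix}}\exp\left(\int_{\mathcal{O}_{-}(x,\epsilon,\tilde\lambda,c)}\divergenceOperator\widehat{Z}_{\epsilon,\tilde\lambda,c}\,dt\right)=-\frac{X_2(p)}{X_1(q)}\exp\left(\frac{1}{\epsilon^2}\widetilde{I}_{-}(x,\epsilon,\tilde\lambda,c)\right),$$
where the off-diagonal entries $X_1(p)$ and $X_2(q)$ drop out and the last equality uses definition \eqref{eq-divergence-intg}.

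Next I would control the prefactor $-X_2(p)/X_1(q)$. Since $p$ lies below the attracting branch of $\overline{C}$, the fast flow points upward there, so a direct evaluation shows that $X_2(p)$ is smooth, positive and bounded away from zero for $(x,\epsilon,\tilde\lambda,c)\sim(\eta_{-},0,0,c_0)$. For the denominator, the factor $\epsilon^2$ in $X_1$ is explicit, and since $q\to p_0$ as $\epsilon\to 0$ with $p_0\in\overline{C}\cap\{x_2=0\}$, the remaining bracket evaluated at $q$ tends to the regular slow speed $X^{sl}_{0,c}(0)=\nu(c)$ (using \eqref{slvf}, \eqref{A1-1} and Statement 3 of Proposition \ref{prop-slow-sliding}; indeed, a short computation shows that the bracket at $p_0$ equals $(\det Z_{0,c})'(0)/\partial_x(Y^+_{0,c}-Y^-_{0,c})(0,0)=\nu(c)$). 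As $\nu(c_0)>0$ by (A2), I obtain $X_1(q)=\epsilon^2(\nu(c)+o(1))>0$, so the ratio is positive and of the form $X_2(p)/X_1(q)=C(x,\epsilon,\tilde\lambda,c)\,\epsilon^{-2}$ with $C$ positive, bounded, bounded away from zero, and $\epsilon$-regularly $C^k$ in $(x,\tilde\lambda,c)$.

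I would then absorb the prefactor into the exponential. Writing
$$-\frac{X_2(p)}{X_1(q)}\exp\left(\frac{1}{\epsilon^2}\widetilde{I}_{-}\right)=-\exp\frac{1}{\epsilon^2}\left(\widetilde{I}_{-}+\epsilon^2\ln\frac{X_2(p)}{X_1(q)}\right),$$
it remains to check that $\epsilon^2\ln\frac{X_2(p)}{X_1(q)}=\epsilon^2\ln C+2\epsilon^2\ln\epsilon^{-1}$ is $o(1)$ and $\epsilon$-regularly $C^k$ in $(x,\tilde\lambda,c)$. This is immediate: $\epsilon^2\ln C$ inherits $\epsilon$-regular $C^k$ smoothness from $C$ and tends to $0$, while $2\epsilon^2\ln\epsilon^{-1}$ depends only on $\epsilon$, is continuous up to $\epsilon=0$ with value $0$, and has vanishing $(x,\tilde\lambda,c)$-derivatives. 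Setting $o(1):=\epsilon^2\ln\frac{X_2(p)}{X_1(q)}$ then yields the stated formula.

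The main obstacle is the control of $X_1(q)$, namely establishing that the transition endpoint $q$ approaches $p_0$ with $X_1(q)=\epsilon^2(\nu(c)+o(1))$ in the $\epsilon$-regular $C^k$ sense. This is delicate because the normal hyperbolicity of $\overline{C}$ degenerates exactly at $x_2=0$ (the nontrivial eigenvalue $(Y^+_{0,c}-Y^-_{0,c})(x_2,0)\phi'$ vanishes there), so $S_3$ sits precisely at the loss of hyperbolicity, i.e.\ at the turning point. The needed regularity of $\Delta_{23}$ and of $q$ along the attracting branch up to this turning point, together with the nonzero regular passage speed $\nu(c)>0$, is supplied by Fenichel theory near the normally attracting part of $\overline{C}$ and by bringing the system into the framework of \cite{DD08} (see Remark \ref{remark-bitno} and Appendix \ref{appendix-extra-blowup}). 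Once this is in hand, the remaining steps are the routine determinant computation and the logarithmic absorption described above.
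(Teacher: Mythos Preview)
Your proposal is correct and follows essentially the same route as the paper's proof: apply Lemma~\ref{Lemma3} to the slow-fast field \eqref{family} with the natural parametrizations of $S_2$ and $S_3$, identify the prefactor as $\epsilon^{-2}$ times a positive bounded quantity (the paper calls it $B$, your $C$), and absorb its logarithm into the exponential as an $o(1)$ term. Your identification of the delicate point---the $\epsilon$-regular $C^k$ control of $\Delta_{23}$ up to the turning point $p_0$, handled via \cite[Theorem~2.16]{DD08} as indicated in Remark~\ref{remark-bitno}---matches exactly what the paper invokes.
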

\begin{proof} For simplicity we denote $\Delta_{23}(x)=\Delta_{23}(x,\epsilon,\tilde\lambda,c)$. Applying Lemma \ref{Lemma3} in \eqref{family} ($x = x_1$ is a coordinate of $S_2$ and $y_2$ is a coordinate of $S_3$) it follows that
\begin{align}
\frac{\partial \Delta_{23}}{\partial x}(x,\epsilon,\tilde\lambda,c)=-\epsilon^{-2}B(x,\epsilon,\tilde\lambda,c)\exp\left(\int_{\mathcal{O_{-}}(x,\epsilon,\tilde\lambda,c)} \divergenceOperator \widehat Z_{\epsilon,\tilde\lambda,c}dt\right),
\end{align}
where $\mathcal{O_{-}}(x,\epsilon,\tilde\lambda,c)$ is the orbit of the vector field $\widehat Z_{\epsilon,\tilde\lambda,c}$ through the point $(x,\frac{\epsilon}{\epsilon_{10}})\in S_2$ until it hits the section $S_3$ and
$$B(x,\epsilon,\tilde\lambda,c) := \frac{Y^{+}_{\epsilon\tilde\lambda, c}(x,-\frac{\epsilon^{2}}{\epsilon_{10}^2})\phi(-\frac{1}{\epsilon_{10}^2})+Y^{-}_{\epsilon\tilde\lambda, c}(x,-\frac{\epsilon^{2}}{\epsilon_{10}^2})\left(1-\phi(-\frac{1}{\epsilon_{10}^2})\right)}{X^{+}_{\epsilon\tilde \lambda, c}(0,\epsilon^{2}\Delta_{23}(x))\phi(\Delta_{23}(x)) + X^{-}_{\epsilon\tilde \lambda, c}(0,\epsilon^{2}\Delta_{23}(x))\left(1-\phi(\Delta_{23}(x))\right)},$$
where $\epsilon_{1}=\epsilon_{10}$ corresponds to $y_2=-\frac{1}{\epsilon_{10}^2}$.

By checking the direction of the flow of system \eqref{family} near the section $S_2$ (see Figure \ref{FIG4}), it is clear that the numerator of $B$ is positive. Remark \ref{remark-bitno} and \cite[Theorem 2.16]{DD08} imply that $\Delta_{23}$ is smooth and has a $C^k$-extension to the closure of its domain. Moreover, $\Delta_{23}$ tends to the $y_2$-component of the intersection point $p_0$ defined in Section \ref{subsec-e1} as $\epsilon\to 0$. This implies that the denominator of $B$ tends to $\nu(c)$ as $\epsilon\to 0$, where $\nu(c)$ is defined in Section \ref{subsection-assump}. Since $\nu(c_0)>0$, the denominator is also positive. Thus, the function $B$ is positive, bounded from above, bounded from below and away from zero.

Now, we have
\begin{equation}\label{dominant-1}
\begin{split}
\frac{\partial \Delta_{23}}{\partial x}(x,\epsilon,\tilde\lambda,c)&=-\exp\left(\ln(\epsilon^{-2}B(x,\epsilon,\tilde\lambda,c))\right)\cdot\exp\left(\int_{\mathcal{O_{-}}(x,\epsilon,\tilde\lambda,c)} \divergenceOperator \widehat Z_{\epsilon,\tilde\lambda,c}dt\right)
\\&=-\exp\frac{1}{\epsilon^2}\left(\widetilde{I}_{-}(x,\epsilon,\tilde\lambda,c) + \epsilon^{2}\ln B(x,\epsilon,\tilde\lambda,c) - 2\epsilon^{2}\ln \epsilon\right)
\\&=-\exp\frac{1}{\epsilon^2}\left(\widetilde{I}_{-}(x,\epsilon,\tilde\lambda,c) + o(1)\right),\nonumber
\end{split}
\end{equation}
where $o(1)$ is $\epsilon-$regularly $C^{k}$ in $(x,\tilde{\lambda},c)$ and tends to $0$ as $\epsilon\to 0$.
Notice that $\epsilon^{2}\ln \epsilon\rightarrow 0$ as $\epsilon\rightarrow 0$. This completes the proof of Lemma \ref{L4}.
\end{proof}

\subsubsection{Combining $\Delta_{01}$, $\Delta_{12}$ and $\Delta_{23}$}\label{subsection-combi}

Recall that $\Delta_-$ is the forward transition map between $S_0$ and $S_3$ introduced in Section \ref{subsection-DM}. For simplicity sake, we write $\Delta_{-}(y)=\Delta_{-}(y,\epsilon,\tilde{\lambda},c)$ and $\widetilde{I}_{-}(x) = \widetilde{I}_{-}(x,\epsilon,\tilde\lambda,c)$. We also write $\Delta_{01}(y)=\Delta_{01}(y,\epsilon,\tilde{\lambda},c)$, $\Delta_{12}(x)=\Delta_{12}(x,\frac{\epsilon}{r_{10}},\tilde{\lambda},c)$, $\Delta_{23}(x)=\Delta_{23}(x,\epsilon,\tilde\lambda,c)$ and finally $\Delta_{-}=\Delta_{23}\circ \Delta_{12} \circ \Delta_{01}$. Additionally, denote $\Delta_{02}^-(y)=\Delta_{12}\circ\Delta_{01}(y)$.

Using Lemmas \ref{L1}, \ref{L2}, \ref{L4} and the chain rule, we have
\begin{equation*}
\begin{array}{rcl}
  \Delta_{-}'(y) & = & -\exp\displaystyle\frac{1}{\epsilon^2}\left(\widetilde{I}_{-}(\Delta_{02}^-(y)) + o(1)\right) \cdot\left(\frac{\partial g_{12}}{\partial x}(\Delta_{01}(y),\tilde{\lambda},c)+O(\epsilon\ln\epsilon^{-1})\right)\cdot\Delta_{01}'(y)\\
  & =  & -\exp\displaystyle\frac{1}{\epsilon^2}\bigg(\widetilde{I}_{-}(\Delta_{02}^-(y)) + o(1) + \epsilon^2 \ln\left(\displaystyle\frac{\partial g_{12}}{\partial x}(\Delta_{01}(y),\tilde{\lambda},c)+O(\epsilon\ln\epsilon^{-1})\right)+\epsilon^2\ln\Delta_{01}'(y)\bigg)
\\& = & -\exp\displaystyle\frac{1}{\epsilon^2}\left(\widetilde{I}_{-}(\Delta_{02}^-(y)) + o(1)\right),
  \end{array}
\end{equation*}
where $o(1)$ is $\epsilon-$regularly $C^{k}$ in $(y,\tilde{\lambda},c)$ tending to $0$ as $\epsilon\to 0$. We used the fact that $\Delta_{01}'>0$ (Lemma \ref{L1}) and $\frac{\partial g_{12}}{\partial x}>0$ (Lemma \ref{L2}). Note also that $\Delta_{01}'$ and $\frac{\partial g_{12}}{\partial x}$ are bounded and uniformly away from zero.

We can find a similar expression for the backward transition map $\Delta_{+}(y)=\Delta_{+}(y,\epsilon,\tilde{\lambda},c)$ between $S_0$ and $S_3$ defined in Section \ref{subsection-DM}:
\begin{equation*}
    \Delta_{+}'(y) = -\exp\displaystyle\frac{1}{\epsilon^2}\left(\widetilde{I}_{+}(\Delta_{02}^{+}(y)) + o(1)\right), \quad \text{with} \quad \widetilde{I}_{+}(x) :=
 \epsilon^2\int_{\mathcal{O_{+}}(x,\epsilon,\tilde\lambda,c)} \divergenceOperator \widehat Z_{\epsilon,\tilde\lambda,c}dt,
\end{equation*}
where $o(1)$ is $\epsilon-$regularly $C^{k}$ in $(y,\tilde{\lambda},c)$ tending to $0$ as $\epsilon\to 0$, $\mathcal{O_{+}}(x,\epsilon,\tilde\lambda,c)$ is the orbit through the point $(x,\frac{\epsilon}{\epsilon_{10}})\in S_2'$ in negative time until it reaches the section $S_3$, and $\Delta_{02}^+(y)$ is the backward transition map from $S_0$ to $S_2'$ ($S'_{2}\subset\{\epsilon_{1}=\epsilon_{10}\}$ has similar parametrization as $S_2$ using $x>0$ instead of $x<0$). In fact, $\Delta_{02}^+(y)$ depends on $(y,\epsilon,\tilde\lambda,c)$. However, we adopt the notation $\Delta_{02}^+(y)$ for simplicity.

Using the above expressions for $\Delta_{\pm}'(y)$, we can easily see that, for $\epsilon>0$, the equation $\frac{\partial \Delta}{\partial y}(y,\epsilon,\tilde\lambda,c)=0$ is equivalent to
\begin{equation}\label{equ-most-import}
    \widetilde{I}_{-}(\Delta_{02}^{-}(y)) - \widetilde{I}_{+}(\Delta_{02}^{+}(y)) + o(1)=0,
\end{equation}
where $o(1)$ has the same property as above. As a simple consequence of Rolle's Theorem, for fixed $(\epsilon,\tilde\lambda,c)$ the number of periodic orbits Hausdorff close to the sliding cycle $\Gamma$ is bounded by the number of solutions of \eqref{equ-most-import} w.r.t. $y$, increased by $1$. Equation \eqref{equ-most-import} plays an important role in the proof of Theorems \ref{Thm-1}-\ref{Thm-4}.

\begin{remark}\label{remark-bitno-new}
{\rm The integrals $\widetilde{I}_{\pm}$ have been studied in \cite{DD08} (see also Remark \ref{remark-bitno}). This fact will significantly simplify the proofs in Sections \ref{proof-1}-\ref{proof-4}. Since we deal with $\widetilde{I}_{\pm}(\Delta_{02}^\pm(y),\epsilon,\tilde\lambda,c)$ in \eqref{equ-most-import}, we will also use Lemma \ref{L1} and Lemma \ref{L2}. Notice that the $o(1)$-term in \eqref{equ-most-import} is not specified because it is not the leading-order part in \eqref{equ-most-import} (for more details, we refer to Sections \ref{proof-1}-\ref{proof-4}).
}
\end{remark}

\begin{remark}\label{remark-bitno-new2} {\rm
When $\epsilon=0$, we denote $\Delta_{02}^\pm(y)$ by $\Pi^\pm_c(y)$. It is clear that $\Pi^-_c(y)$ (resp. $\Pi^+_c(y)$) is the $x$-value of the intersection with the switching line $\{y=0\}$ of the forward (resp. backward) flow of $(0,y)$ w.r.t. $Z_{0,c}^-$. We have $\Pi^\pm_{c_0}(s_0)=\eta_\pm$, $(\Pi^-_c)'>0$, $(\Pi^+_c)'<0$ and $\Pi_c=\Pi^-_c\circ (\Pi^+_c)^{-1}$ where $s_0$ is defined in Section \ref{subsection-DM} and $\Pi_c$ is the Poincar\'{e} half-map defined in Section \ref{sec2.2}.
}
\end{remark}

\subsection{Proof of Theorem \ref{Thm-1}} \label{proof-1}

In this section, we consider $Z_{\epsilon,\tilde \lambda,c}$ and assume that $X_{0,c_0}^{sl}(\eta_{\pm})\ne 0$ and that the open interval $(\eta_{-},\eta_{+})$ contains singularities of $X_{0,c_0}^{sl}$. Statement 3 of Proposition \ref{prop-slow-sliding} implies that the slow dynamics of \eqref{family} for $c=c_0$ is regular at the corner points $x=\eta_{\pm}$ and has singularities in $(\eta_{-},\eta_{+})$ (away from $x=0)$. Following \cite[Section 4.2]{DD08}, we have that
\begin{align}\label{proofth2.2-1}
    \frac{\partial \widetilde{I}_{\pm}}{\partial x}(x)&=\frac{\partial I_\pm}{\partial x}(x,c)+o(1)\nonumber\\
    &=-\frac{(Y_{0,c}^{+}-Y_{0,c}^{-})^{2}(x,0)}{\det Z_{0,c}(x)}\phi'\left(\phi^{-1}\left(\frac{-Y_{0,c}^{-}}{Y_{0,c}^{+}-Y_{0,c}^{-}}(x,0)\right)\right)+o(1),
\end{align}
where $x$ is kept near $\eta_{\pm}$, $I_\pm$ are the slow divergence integrals defined in \eqref{slowdivint-attracting} and \eqref{slowdivint-repelling},  $\widetilde{I}_{\pm}$ are the integrals defined in Sections \ref{subsection-Del23} and \ref{subsection-combi} and $o(1)$ is $\epsilon-$regularly $C^{k}$ in $(x,\tilde{\lambda},c)$ tending to $0$ as $\epsilon\to 0$. The interval $(\eta_{-},\eta_{+})$ contains singularities of the slow dynamics for $c=c_0$, but note that $\frac{\partial I_\pm}{\partial x}$ are well-defined smooth functions in a small neighborhood of $(x,c)=(\eta_{\pm},c_0)$ because $X_{0,c_0}^{sl}(\eta_{\pm})\ne 0$ (see also \cite{DD08}).

From \eqref{proofth2.2-1} and Remark \ref{remark-bitno-new2} it follows that
\begin{equation}
    \label{proofth2.2-2}
    \frac{\partial}{\partial y}\left(\widetilde{I}_{-}(\Delta_{02}^-(y))-\widetilde{I}_{+}(\Delta_{02}^+(y))\right)=\frac{\partial I}{\partial x}(\Pi^+_c(y),c)(\Pi^+_c)'(y)+o(1),
\end{equation}
where $\frac{\partial I}{\partial x}$ is defined in \eqref{eq-der-SDI} and $o(1)$ has a similar property as above.

Suppose that $x=\eta_{+}$ is a zero of multiplicity $m$ of $\frac{\partial I}{\partial x}(x,c_0)$. Then $y=s_0$ is also a zero of multiplicity $m$ of $\hat I(y):=\frac{\partial I}{\partial x}(\Pi^+_{c_0}(y),c_0) (\Pi^+_{c_0})'(y)$, because $\Pi^+_{c_0}(s_0)=\eta_+$ and $(\Pi^+_c)'\ne 0$. This implies that
$$\frac{d^m\hat I}{d y^m}(s_0)\ne 0.$$

Using \eqref{proofth2.2-2} we conclude that
$$\frac{\partial^{m+1}}{\partial y^{m+1}}\left(\widetilde{I}_{-}(\Delta_{02}^-(y))-\widetilde{I}_{+}(\Delta_{02}^+(y))\right)\ne 0$$
near $y=s_0$ and $(\epsilon,\tilde\lambda,c)=(0,0,c_0)$. Thus, \eqref{equ-most-import} has at most $m+1$ solutions (counting multiplicity) w.r.t. $y$ near $y=s_0$ and $(\epsilon,\tilde\lambda,c)=(0,0,c_0)$. This shows that the sliding cycle $\Gamma$ can produce at most $m+2$ limit cycles. We have proved Theorem \ref{Thm-1}.

\subsection{Proof of Theorem \ref{Thm-2}} \label{proof-2}
Recall that $X_{0,c_0}^{sl}(0)>0$.
When $X_{0,c_0}^{sl}$ is regular in the stable sliding interval $[\eta_{-},0)$ (that is, the slow dynamics of \eqref{family} for $c=c_0$ is regular in $[\eta_{-},0)$), then it is shown (see \cite{DD08} and \cite{HK23}) that
\begin{equation}
    \label{SD-regular}
    \widetilde{I}_{-}(x)=I_-(x,c)+f_1(x,\epsilon,\tilde{\lambda},c)+f_2(\epsilon, \tilde{\lambda},c)\epsilon^2\ln\epsilon,
\end{equation}
where $f_1$ and $f_2$ are smooth functions, with $f_1(x,0,\tilde{\lambda},c)=0$, and $I_-$ is the slow divergence integral defined in \eqref{slowdivint-attracting}. From \eqref{SD-regular} it follows that $\widetilde{I}_{-}$ is bounded for $(x,\epsilon,\tilde{\lambda},c)$ kept near $(\eta_-,0,0,c_0)$. Since $\Delta_{02}^-(y)\to \eta_-$ as $(y,\epsilon,\tilde{\lambda},c)\to(s_0,0,0,c_0)$, we conclude that $\widetilde{I}_{-}(\Delta_{02}^-(y))$ is bounded near $(y,\epsilon,\tilde{\lambda},c)=(s_0,0,0,c_0)$. A similar result is true for $\widetilde{I}_{+}(\Delta_{02}^+(y))$ when $X_{0,c_0}^{sl}$ is regular in the unstable sliding interval $(0,\eta_{+}]$.

If $X_{0,c_0}^{sl}(\eta_{-})=0$ ($X_{0,c_0}^{sl}$ may have singularities in the stable sliding interval $(\eta_{-},0)$), then $$\widetilde{I}_{-}(x)\to -\infty, \ \left|\frac{\partial \widetilde{I}_{-}}{\partial x}(x)\right|\to +\infty$$
as $(x,\epsilon,\tilde{\lambda},c)\to(\eta_-,0,0,c_0)$ (see \cite{DD08}). Since $\Delta_{02}^-(y)\to \eta_-$ as $(y,\epsilon,\tilde{\lambda},c)\to(s_0,0,0,c_0)$ and $\frac{\partial \Delta_{02}^-}{\partial y}(y)\ne 0$, we conclude that $$\widetilde{I}_{-}(\Delta_{02}^-(y))\to -\infty, \ \left|\frac{\partial} {\partial y}\left(\widetilde{I}_{-}(\Delta_{02}^-(y))\right)\right|\to +\infty$$
as $(y,\epsilon,\tilde{\lambda},c)\to(s_0,0,0,c_0)$.
We have a similar result for $\widetilde{I}_{+}(\Delta_{02}^+(y))$ when $X_{0,c_0}^{sl}(\eta_{+})=0$.

\textit{Proof of Statement 1 of Theorem \ref{Thm-2}.}
 Assume that $x=\eta_{-}$ is a singularity of the sliding vector field $X_{0,c_0}^{sl}$ and that $X_{0,c_0}^{sl}$ is regular in the unstable sliding interval $(0,\eta_{+}]$. Then the above discussion implies that $\widetilde{I}_{-}(\Delta_{02}^-(y))\to -\infty$
as $(y,\epsilon,\tilde{\lambda},c)\to(s_0,0,0,c_0)$ and $\widetilde{I}_{+}(\Delta_{02}^+(y))$ is bounded.
 Thus, the equation in \eqref{equ-most-import} has no solutions and $\Gamma$ produces at most $1$ limit cycle. Since the total divergence integral $\widetilde{I}_{-}(\Delta_{02}^{-}(y)) - \widetilde{I}_{+}(\Delta_{02}^{+}(y))$ is negative, the limit cycle, if it exists, is hyperbolic and attracting.

\textit{Proof of Statement 2 of Theorem \ref{Thm-2}.}  Analogous to the proof of Statement 1.

\textit{Proof of Statement 3 of Theorem \ref{Thm-2}.} We assume that $X_{0,c_0}^{sl}(\eta_{-})=0$ and $X_{0,c_0}^{sl}(\eta_{+})\neq 0$ ($X_{0,c_0}^{sl}$ may have singularities in $(\eta_{-},\eta_{+})$). Then $\left|\frac{\partial} {\partial y}\left(\widetilde{I}_{-}(\Delta_{02}^-(y))\right)\right|\to +\infty$
as $(y,\epsilon,\tilde{\lambda},c)\to(s_0,0,0,c_0)$, and $\frac{\partial} {\partial y}\left(\widetilde{I}_{+}(\Delta_{02}^+(y))\right)$ is bounded (see Section \ref{proof-1}). This implies that
$$\frac{\partial}{\partial y}\left(\widetilde{I}_{-}(\Delta_{02}^-(y))-\widetilde{I}_{+}(\Delta_{02}^+(y))\right)$$ is nonzero near $(s_0,0,0,c_0)$. Thus, the equation in \eqref{equ-most-import} has at most $1$ solution and $\Gamma$ generates at most $2$ limit cycles.

We use a similar proof if $X_{0,c_0}^{sl}(\eta_{-})\ne 0$, $X_{0,c_0}^{sl}(\eta_{+})=0$.

\subsection{Proof of Theorem \ref{Thm-3}} \label{proof-3}
The proof of Theorem \ref{Thm-3} is similar to the proof of \cite[Theorem 2.26]{DD08}. However, for the sake of completeness, we will prove it. We assume that $x=\eta_\pm$ is a zero of multiplicity $m_\pm\ge 1$ of $X_{0,c_0}^{sl}$, with $m_-\ne m_+$ ($X_{0,c_0}^{sl}$ may have singularities in $(\eta_{-},\eta_{+})$). Using \cite{DD08} (see also \cite[Section 3.1.2]{H22}) we have
\begin{equation}
    \label{eq-m+-}
    \frac{\partial \widetilde{I}_{\pm}}{\partial x}(x)=\frac{\left(1+O(\epsilon^2)\right)\frac{\partial\beta_\pm}{\partial x}(x)}{h_\pm(\beta_\pm(x))}+O(1),
\end{equation}
where $h_\pm$ and $\beta_\pm$ are defined in Remark \ref{remark-nor-for}, $O(\epsilon^2)$ is a $C^k$-function and $O(1)$ is $\epsilon-$regularly $C^{k}$ in $(x,\tilde{\lambda},c)$.
\begin{remark}\label{remark-nor-for}{\rm
    The first term in \eqref{eq-m+-} represents the derivative of a portion of the divergence integral $\widetilde{I}_{\pm}$ calculated near the corner point $x=\eta_\pm$ in a $C^k$-normal form for equivalence
    \begin{equation}\label{Cknormalform}
\begin{array}{rcl}
\dot{v}_{1} & = & \mp\epsilon^2h_\pm(v_1,\epsilon,\tilde\lambda,c), \\[3mm]
\dot{v}_{2} & = & \pm v_2,
\end{array}
\end{equation}
between $\{v_2=1\}$, parameterized by $v_1\sim 0$, and $\{v_1=1\}$, parameterized by $v_2$ (see \cite[Section 4.5]{DD08}). This is calculated in negative time near $x=\eta_+$.  We suppose that the orbit $\mathcal{O_{\pm}}(x,\epsilon,\tilde\lambda,c)$, defined above, intersects the section $\{v_2=1\}$ in $v_1=\beta_\pm(x,\epsilon,\tilde\lambda,c)$. The function $\beta_-$ (resp. $\beta_+$) is a $C^k$-diffeomorphism with $\beta_-(\eta_-,0,0,c_0)=0$ and $\frac{\partial\beta_-}{\partial x}>0$ (resp. $\beta_+(\eta_+,0,0,c_0)=0$ and $\frac{\partial\beta_+}{\partial x}<0$), due to the chosen parametrization of $S_2$, $S_2'$ and $\{v_2=1\}$. In normal form coordinates \eqref{Cknormalform}, the slow dynamics is given by $v_1'=\mp h_\pm(v_1,0,0,c)$. We have that $v_1=0$ is a zero of multiplicity $m_\pm$ of $h_\pm(v_1,0,0,c_0)$.

    }
\end{remark}

Denote $H^{\pm}(y) = \beta_{\pm}\circ\Delta_{02}^{\pm}(y)$ and $\frac{\partial H^{\pm}}{\partial y}(y) = \frac{\partial\beta_\pm}{\partial x}(\Delta_{02}^\pm(y))\frac{\partial \Delta_{02}^\pm}{\partial y}(y)$. We have
 \begin{equation*}
     \frac{\partial}{\partial y}\left(\widetilde{I}_{-}(\Delta_{02}^-(y))-\widetilde{I}_{+}(\Delta_{02}^+(y))\right)\nonumber  = \frac{\left(1+O(\epsilon^2)\right)\frac{\partial H^{-}}{\partial y}(y)}{h_{-}(H^{-}(y))} - \frac{\left(1+O(\epsilon^2)\right)\frac{\partial H^{+}}{\partial y}(y)}{h_{+}(H^{+}(y))} + O(1).
 \end{equation*}
 
Now we can write
\begin{align}\label{eq-long}
 h_{-}(H^{-}(y)) h_+(H^{+}(y))  & \frac{\partial}{\partial y}\left(\widetilde{I}_{-}(\Delta_{02}^-(y))-\widetilde{I}_{+}(\Delta_{02}^+(y))\right)\nonumber = \\
  = & \left(1+O(\epsilon^2)\right)\frac{\partial H^{-}}{\partial y}(y)h_+(H^{+}(y))\nonumber \\
  & - \left(1+O(\epsilon^2)\right)\frac{\partial H^{+}}{\partial y}(y)h_{-}(H^{-}(y)) + h_{-}(H^{-}(y)) h_{+}(H^{+}(y)) O(1).
 \end{align}
 
Since the partial derivatives of $\beta_\pm$ (resp. $\Delta_{02}^\pm$) w.r.t. $x$ (resp. $y$) are nonzero, $H^{\pm}(s_{0}) = \beta_\pm(\Delta_{02}^\pm(s_0))=0$ for $(\epsilon,\tilde{\lambda},c)=(0,0,c_0)$, and $h_\pm$ has a zero of multiplicity $m_\pm$ ($m_-\ne m_+$) at the origin for $(\epsilon,\tilde{\lambda},c)=(0,0,c_0)$, we conclude that the right-hand side of \eqref{eq-long} for
 $(\epsilon,\tilde{\lambda},c)=(0,0,c_0)$ has a zero of multiplicity $\min\{m_-,m_+\}$ at $y=s_0$. Thus, the right-hand side of \eqref{eq-long} has at most $\min\{m_-,m_+\}$ zeros (counting multiplicity) near $(s_0,0,0,c_0)$. This implies that $\widetilde{I}_{-}(\Delta_{02}^-(y))-\widetilde{I}_{+}(\Delta_{02}^+(y))$ has at most $1+\min\{m_-,m_+\}$ zeros and $\Gamma$ can therefore produce at most $2+\min\{m_-,m_+\}$ limit cycles.

\subsection{Proof of Theorem \ref{Thm-4}} \label{proof-4}
In this section we assume that $X_{0,c_0}^{sl}$ (or, equivalently, the slow dynamics of \eqref{family}) has a simple zero at both corner points.
Using the reduction of the dynamics of \eqref{family} to invariant manifolds (see the proof of Proposition \ref{prop-slow-sliding}) and the implicit function theorem, it can be easily seen that the simple zero $x=\eta_\pm$ of $X_{0,c_0}^{sl}$ generates a hyperbolic saddle of \eqref{family} for $\epsilon>0$ with ratio of eigenvalues  $-\epsilon^{2}\rho_{\pm}$, $\rho_{\pm}=\rho_{\pm}(\epsilon,\tilde \lambda,c)>0$. This implies that \eqref{PWSRescaled} has a hyperbolic saddle near $x=\eta_\pm$ with the same ratio of eigenvalues.

Assume that $\rho_{-}\ne \rho_{+}$ for $(\epsilon,\tilde \lambda,c)=(0,0,c_0)$.

\textit{Proof of Statement 1 of Theorem \ref{Thm-4}.} We assume that the interval $(\eta_{-},\eta_{+})$ does not contain singularities of $X_{0,c_0}^{sl}$. From \cite[Section 4.4]{DD08} it follows that
\begin{equation}
    \label{eq-proof-ln}
    \widetilde{I}_{\pm}(x)=\frac{1}{\rho_\pm}\ln\beta_\pm(x,\epsilon,\tilde\lambda,c)+O(1),
\end{equation}
where $\beta_\pm$ and $O(1)$ have the same properties like $\beta_\pm$ and $O(1)$ in \eqref{eq-m+-}.
\begin{remark}{\rm
    The $\log$-term in \eqref{eq-proof-ln} is a portion of the divergence integral $\widetilde{I}_{\pm}$ calculated near the simple zero $x=\eta_\pm$ in a $C^k$-normal form for equivalence $\{\dot v_1=\mp\epsilon^2\rho_\pm v_1,\dot v_2=\pm v_2\}$ from the section $\{v_2=1\}$ to the section $\{v_1=1\}$. We refer to \cite[Section 4.4]{DD08}. Again, it is calculated in negative time near $x=\eta_+$.
    }
\end{remark}

Once again we adopt the notation $H^{\pm}(y) = \beta_{\pm}\circ\Delta_{02}^{\pm}(y)$. It follows that
$$\widetilde{I}_{-}(\Delta_{02}^-(y))-\widetilde{I}_{+}(\Delta_{02}^+(y))=\frac{1}{\rho_-}\ln H^{-}(y) - \frac{1}{\rho_+}\ln H^{+}(y) + O(1).$$

If we use the change of variable $v = H^{-}(y) = \beta_-(\Delta_{02}^-(y))$ (note that $\frac{\partial H^{-}}{\partial y}(y) \neq 0$), then the above expression becomes
$$\frac{1}{\rho_-}\ln v-\frac{1}{\rho_+}\ln\tilde\beta_+(v)+O(1).$$

The orbit through $v\in\{v_2=1\}$ near $x=\eta_-$ intersects the section $\{v_2=1\}$ near $x=\eta_+$ (in negative time) at a point with $v_1=\tilde\beta_+(v)$.
Observe that $\tilde\beta_+(0)=0$ for $(\epsilon,\tilde{\lambda},c)=(0,0,c_0)$ and  $\frac{\partial \tilde\beta_+}{\partial v}\ne 0$. Following \cite[Lemma 4.11]{DD08}, this expression has at most $1$ zero near $(v,\epsilon,\tilde{\lambda},c)=(0,0,0,c_0)$, which is simple. This implies Statement 1.

\textit{Proof of Statement 2 of Theorem \ref{Thm-4}.} We assume that the interval $(\eta_{-},\eta_{+})$ contains singularities of $X_{0,c_0}^{sl}$. We will use the idea explained in the proof of Theorem \ref{Thm-3}. Since $x=\eta_\pm$ are simple zeros of $X_{0,c_0}^{sl}$, we have $m_\pm=1$. Note that we can rescale the $v_1$-variable in \eqref{Cknormalform} so that $$\frac{\partial H^{\pm}}{\partial y}(y) = \frac{\partial }{\partial y}\beta_\pm(\Delta_{02}^\pm(y))=1$$ for $(y,\epsilon,\tilde{\lambda},c)=(s_0,0,0,c_0)$. Then the right-hand side of \eqref{eq-long} can be written as
$$(\rho_+-\rho_-)(y-s_0)+ O\left((y-s_0)^2\right)$$
for $(\epsilon,\tilde{\lambda},c)=(0,0,c_0)$. This expression has $1$ zero (multiplicity counted) near $y=s_0$ because $\rho_-\ne\rho_+$. This implies Statement 2.

\section{Applications to regularized piecewise linear
visible-invisible two-folds}
\label{sec4}
In this section, we study limit cycles of regularized
piecewise linear (PWL) visible-invisible two-fold singularities \cite{HK24}. We focus on sliding cycles $\Gamma$ that contain a singularity of the sliding vector field as their corner point (the cyclicity of sliding cycles with a regular sliding vector field has been studied in \cite{HK24} by using the slow divergence integral \eqref{SDI-statement}).

Consider a planar PWL system
\begin{equation}\label{eq-pwl-general}
Z^{\pm}(z)=A^{\pm}z+b^{\pm},
\end{equation}
with $z=(x,y)$, $A=(A_{ij}^{\pm})$, $b=(b_{1}^{\pm},b_{2}^{\pm})^{T}$ and the switching manifold $\{y=0\}$. If system \eqref{eq-pwl-general} satisfies the first line of \eqref{A1-1} and $\nu(c)>0$, with $\nu(c)$ defined in Section \ref{subsection-assump}, then there exists an affine transformation bringing \eqref{eq-pwl-general} into
\begin{equation}\label{PWL}
\tilde{Z}^{-}({x},{y})=\left(
\begin{array}{c}
-1+d^{-}{y} \\[3mm]
-{x}+t^{-}{y}
\end{array}
\right),
\quad \quad  \tilde{Z}^{+}({x},{y})=\left(
\begin{array}{c}
 b^{+}+a_{11}^{+}{x}+a_{12}^{+}{y} \\[3mm]
a_{21}^{+}{x}+a_{22}^{+}{y}
\end{array}
\right),
\end{equation}
with $b^{+}>a_{21}^{+}>0$, $t^{-}=\text{tr}(DZ^{-})$  and $d^{-}=\det(DZ^{-})$. For more details, we refer to \cite[Proposition 2.1]{HK24}. It can be checked that $\Sigma_{sl}=(-\infty,0)\cup(0,\infty)$ and $\Sigma_{T}=\{0\}$.

By \eqref{slvf}, the Filippov sliding vector field is
\begin{equation}
X^{sl}({x})=\frac{1}{1+a_{21}^{+}}(b^{+}-a_{21}^{+}+a_{11}^{+}{x}).\nonumber
\end{equation}
The sliding vector field $X^{sl}$ has a singularity $ x^{*}=\frac{a_{21}^{+}-b^{+}}{a_{11}^{+}}\ne 0$ of multiplicity $1$ when $a_{11}^{+}\neq0$. Observe that $x^{*} < 0$ (resp. $x^{*} > 0$) if, and only if, $a_{11}^{+} > 0$ (resp. $a_{11}^{+} < 0$).

We study limit cycles of the following regularized planar PWL system (see \eqref{PWSRescaled})
\begin{equation}\label{PWSRescaledLinear}
\dot{z}=Z_{\epsilon\tilde\lambda,c}^{+}(z)\phi(y\epsilon^{-2})+Z^{-}_{\epsilon\tilde\lambda,c}(z)(1-\phi(y\epsilon^{-2})),
\end{equation}
where $0<\epsilon\ll1$, $\tilde \lambda\sim 0$, $Z_{\epsilon\tilde\lambda,c}^\pm$ are smooth planar affine vector fields, $Z_{0,c}^\pm=\tilde{Z}^{\pm}$ and $\phi$ satisfies (A4). We take $c=(d^{-},t^{-},b^{+},a_{11}^{+},a_{12}^{+},
a_{21}^{+},a_{22}^{+})$.

Since the sliding cycles are located in $\{y\leq0\}$, we first have to investigate the dynamics of $\tilde{Z}^{-}$. In what follows, we briefly describe the phase portrait of $\tilde{Z}^{-}$ based on the coefficients $d^{-}$ and $t^{-}$. The following facts are relevant when we study such sliding cycles (see also \cite[Section 2.4]{HK24}).

If $d^{-}=0$, then $\tilde{Z}^{-}$ has no singularities. In particular, if $t^{-}\neq0$, then $\tilde{Z}^{-}$ has an invariant line $x = t^{-}y+\frac{1}{t^{-}}$ and, if $t^{-}=0$, then orbits of $\tilde{Z}^{-}$ are parabolas $y=\frac{1}{2}x^2+C$. Now, suppose $d^{-} \neq 0$. Then $\tilde{Z}^{-}$ has a singularity positioned at $P = (\frac{t^{-}}{d^{-}},\frac{1}{d^{-}})$, with eigenvalues $$\kappa_{\pm}=\frac{t^{-}\pm\sqrt{(t^{-})^{2}-4d^{-}}}{2}.$$
We write $x_L,x_R=\frac{1}{\kappa_\pm}$ where we suppose that $x_L<x_R$ if $x_L\ne x_R$. Now, we have

\begin{itemize}
    \item[\textbf{(1)}] If $d^{-}<0$, then $P$ is a hyperbolic saddle whose stable and unstable straight manifolds intersect the switching manifold at $x={x}_{R}>0$ and $x={x}_{L}<0$, respectively.
\end{itemize}
If $d^{-}>0$, then $P$ is a virtual singularity. In this case, it also follows that

\begin{itemize}
    \item[\textbf{(2)}] If $(t^{-})^2-4d^{-} \geq 0$, then $P$ is a (virtual) node whose separatrices intersect the switching manifold at points $(0, x_{L})$ and $(0, x_{R})$, where $0<x_L\le x_R$ or $x_L\le x_R<0$. Moreover, $P$ is repelling when $t^{-} > 0$  and attracting when $t^{-} < 0$.
    \item[\textbf{(3)}] If $(t^{-})^2-4d^{-} < 0$, then $P$ is a (virtual) monodromic point. It is a repelling focus if $t^{-} > 0$, an attracting one if $t^{-} < 0$ and a center if $t^{-} = 0$.
    \end{itemize}

Now we are able to study the number of limit cycles of \eqref{PWSRescaledLinear} produced by sliding cycles of \eqref{PWL}, with the simple zero $x^*$ of the sliding vector field at a corner point.  The next result follows directly from Statements 1 and 2 of Theorem \ref{Thm-2}.

\begin{theorem}\label{thm-appl}
Suppose that $a_{11}^{+}\neq 0$ in system \eqref{PWL}. Assume that \eqref{PWL} satisfies one of the following configurations.
\begin{enumerate}
    \item(\textbf{The case without singularities}) Suppose that $d^{-}=0$, and assume that one of the following is satisfied \textbf{(a)} $t^{-}=0$; or \textbf{(b)} $t^{-}>0$ and $ x^{*}<\frac{1}{t^{-}}$; or \textbf{(c)} $t^{-}<0$ and $ x^{*}>\frac{1}{t^{-}}$. See Cases I, II and III in Figure \ref{fig-piecewise}.
    \item(\textbf{Saddle case}) Suppose that $d^{-}<0$, and $\ {x}_L <  {x}^{*} < {x}_R$. See Cases IV and V in Figure \ref{fig-piecewise}.
    \item \textbf{(Node case)} Suppose that $d^{-}>0$ and $(t^{-})^2-4d^{-} \geq 0$, and assume that one of the following is satisfied \textbf{(a)} $t^{-}>0$ and ${x}^{*}<{x}_L, {x}_R$; or \textbf{(b)} $t^{-} < 0$ and ${x}^{*} > {x}_L, {x}_R$. See Cases VI and VII in Figure \ref{fig-piecewise}.
    \item \textbf{(Monodromic case)} Suppose that $d^{-}>0$ and $(t^{-})^2-4d^{-}<0$. See Cases VIII, IX and X in Figure \ref{fig-piecewise}.
\end{enumerate}

Then the cyclicity of sliding cycles having the singularity $x^*$ at the corner point is bounded by $1$. Moreover, if it exists, the limit cycle is hyperbolic and attracting (resp. repelling) if $a_{11}^{+}  >0$ (resp. $a_{11}^{+} < 0$).
\end{theorem}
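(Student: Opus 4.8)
The plan is to reduce everything to Theorem \ref{Thm-2}. Concretely, I would verify that, for each of the four configurations, the regularized piecewise linear family \eqref{PWSRescaledLinear} together with the sliding cycle whose corner is $x^*$ satisfies assumptions (A1)--(A4), and then apply Statement 1 or Statement 2 of Theorem \ref{Thm-2}, the choice being dictated by the sign of $a_{11}^+$. Since the conclusion distinguishes precisely the cases $a_{11}^+>0$ (attracting) and $a_{11}^+<0$ (repelling), and since $x^*<0$ is equivalent to $a_{11}^+>0$, the whole point is that the unique simple zero $x^*$ of the affine sliding vector field sits at the \emph{left} corner exactly when $a_{11}^+>0$ and at the \emph{right} corner exactly when $a_{11}^+<0$.

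First I would check the pointwise assumptions (A1), (A2) and (A4) directly from the normal form \eqref{PWL}. Restricting to $\Sigma$ gives $Y^-_{0,c}(x,0)=-x$ and $Y^+_{0,c}(x,0)=a_{21}^+x$ with $a_{21}^+>0$, so the sign pattern in the second line of \eqref{A1-1} holds; the first line of \eqref{A1-1} follows from $X^+_{0,c}(0,0)=b^+>0$, $X^-_{0,c}(0,0)=-1<0$, $\partial_xY^+_{0,c}(0,0)=a_{21}^+>0$ and $\partial_xY^-_{0,c}(0,0)=-1<0$, whence the origin is a visible-invisible two-fold with the required stable/unstable sliding and (A1) is satisfied. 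For (A2), the explicit formula $X^{sl}(x)=\frac{1}{1+a_{21}^+}(b^+-a_{21}^++a_{11}^+x)$ gives $\nu(c_0)=X^{sl}(0)=\frac{b^+-a_{21}^+}{1+a_{21}^+}>0$, and since $X^{sl}$ is affine with slope of sign $a_{11}^+$, it has the single simple zero $x^*$; thus when $a_{11}^+>0$ one has $x^*<0$ and $X^{sl}>0$ on $(x^*,\eta_+]$, while when $a_{11}^+<0$ one has $x^*>0$ and $X^{sl}>0$ on $[\eta_-,x^*)$. In either case the only zero lies at a corner, with odd multiplicity $1$, so no interior singularity of odd multiplicity can obstruct the passage and (A2) holds. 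Assumption (A4) is built into the hypotheses on $\phi$ in \eqref{PWSRescaledLinear}.

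The main work, and the step I expect to be the genuine obstacle, is checking (A3): that the orbit of $\tilde Z^-$ issued from the right corner reaches the left corner in forward time, so that a sliding cycle with corner $x^*$ actually closes up and the half-map $\Pi_c$ is a well-defined diffeomorphism near $x^*$. Here I would argue case by case using the phase-portrait description (1)--(3) recalled before the theorem, reducing the verification to the inequalities listed in each configuration. In the non-singular case $d^-=0$ (Cases I--III) the lower orbits are either parabolas ($t^-=0$) or are organized by the invariant line $x=t^-y+\frac{1}{t^-}$; the conditions $x^*<\frac{1}{t^-}$ (for $t^->0$) and $x^*>\frac{1}{t^-}$ (for $t^-<0$) place $x^*$ on the side of the invariant line from which orbits cross from $\{x>0\}$ back to $\{x<0\}$, producing the return. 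In the saddle case $d^-<0$ (Cases IV--V) the condition $x_L<x^*<x_R$ places the corner strictly inside the sector bounded by the stable and unstable separatrices, so orbits below $\Sigma$ connect the two half-lines of the sliding set; in the node case (Cases VI--VII) the analogous inequalities $x^*<x_L,x_R$ (for $t^->0$) or $x^*>x_L,x_R$ (for $t^-<0$) play the same role; and in the monodromic case (Cases VIII--X) every lower orbit spirals, or for the center closes, from the right half-line to the left one, so (A3) is automatic. In all cases the closed curve $\Gamma$ of assumption (A3) exists.

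Finally, with (A1)--(A4) in force, I would conclude by invoking Theorem \ref{Thm-2}. If $a_{11}^+>0$, then $\eta_-=x^*$ is the unique simple zero of $X^{sl}$, $X^{sl}(\eta_+)\ne 0$, and $X^{sl}$ has no zero in the unstable interval $(0,\eta_+)$; Statement 1 of Theorem \ref{Thm-2} then gives $\cycl(Z_{\epsilon,\tilde\lambda,c},\Gamma)\le 1$ with the limit cycle, if present, hyperbolic and attracting. If $a_{11}^+<0$, then $\eta_+=x^*$, $X^{sl}(\eta_-)\ne 0$, and $X^{sl}$ has no zero in the stable interval $(\eta_-,0)$; Statement 2 of Theorem \ref{Thm-2} gives $\cycl(Z_{\epsilon,\tilde\lambda,c},\Gamma)\le 1$ with the limit cycle, if present, hyperbolic and repelling. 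This matches the claimed dependence of stability on the sign of $a_{11}^+$ and completes the argument.
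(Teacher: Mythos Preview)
Your proposal is correct and follows the same approach as the paper, which simply states that the result ``follows directly from Statements 1 and 2 of Theorem~\ref{Thm-2}.'' You supply considerably more detail than the paper does---in particular the explicit verification of (A1)--(A4) and the observation that $x^*<0\Leftrightarrow a_{11}^+>0$ (forcing the corner singularity to the left or right so as to match Statement~1 or~2)---but the underlying argument is identical.
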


%\begin{figure}[htp]
%\begin{center}
%\includegraphics[width=6.5in]{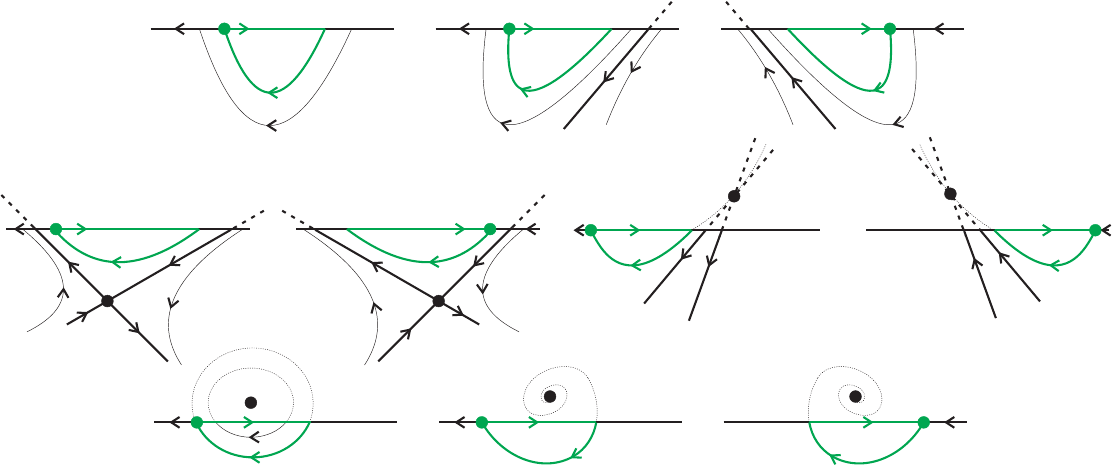}
%\caption{\footnotesize{Cases in Theorem\ref{thm-appl}}}\label{fig-piecewise}
%\end{center}
%\end{figure}

\begin{figure}[ht]\center{
\begin{overpic}[width=1\textwidth]{fig-piecewise.pdf}
%\begin{overpic}[grid,tics=10,width=1\textwidth]{fig-piecewise.pdf}

\put(24,28){I}
\put(45,28){II}
\put(76,28){III}

\put(8,10){IV}
\put(39,10){V}
\put(65,11){VI}
\put(85,11){VII}

\put(22,-2){VIII}
\put(50,-2){IX}
\put(76,-2){X}

\end{overpic}}
\caption{Cases in Theorem \ref{thm-appl}. Observe that in cases I, II, IV, VI, VII and IX the singularity $x^{*}$ is positioned at $\eta_{-}$, however the case in which $x^{*}$ is positioned at $\eta_{+}$ is also allowed. Similarly, In cases III, V, VII and X the singularity $x^{*}$ is positioned at $\eta_{+}$, but it could be positioned at $\eta_{-}$ as well.}\label{fig-piecewise}
\end{figure}

\appendix

\section{Blowing up the degenerate line $\overline{H}$}\label{appendix-extra-blowup}

In this section, we show that the slow-fast system \eqref{family} is in the framework of \cite{DD08}, near the intersection $p_0$ of $\overline{C}$ with $\overline{H}$ (see Section \ref{subsec-e1}). Recall that system \eqref{family} satisfies the assumptions of \cite{DD08} along $[\eta_{-}, \eta_{+}]\subset\overline{C}$, but away from $p_0$ (see Proposition \ref{prop-slow-sliding} and Remark \ref{remark-bitno}).

%Consider the compact interval $[\eta_{-}, \eta_{+}]\subset \overline{C}$. During this section, in the parameter space with coordinates $(\tilde{\lambda},c)$, we assume that $(\tilde{\lambda},c)\in\overline{\Lambda}$, in which $\overline{\Lambda} = \Lambda \cup \partial \Lambda$ is a compact set contained in $\mathbb{R}^{1+l}$, $\Lambda$ is open and $(0,c_{0})\in\partial\Lambda.$

%As discussed in Section \ref{subsec-e1}, the degenerate line $\overline{H}$ divides the horizontal segment $[\eta_{-}, \eta_{+}]$ in normally attracting ($x_{2} < 0$) and normally repelling ($x_{2} > 0$) parts, for every $(\tilde{\lambda},c)\in\overline{\Lambda}$. Moreover, the slow dynamics is given by the sliding vector field $X^{sl}_{0,c}$ with \textit{order of degeneracy} equals $2$. Concerning the dynamics of $X^{sl}_{0,c}$, since we are considering the cases described in Section \ref{sec2.2} (see also Figure \ref{FIG3}), then at the corner points the sliding vector field can be regular or can present hyperbolic saddles. Additionally, in the open segment $(\eta_{-}, \eta_{+})$, it can present saddle nodes for $x_{2}\neq 0$. Therefore, system \eqref{family} satisfies assumptions $2.1$, $2.3$ and $2.4$ (in fact, due to the presence of singularities in $(\eta_{-}, \eta_{+})$, it violates assumption $2.4$ii).

%Now, we must verify assumptions $2.10$, $2.11$ and $2.13$.
For this purpose, we must blow up the degenerate line $\overline{H}$ (see Figure \ref{FIG6}). We apply the (cylindrical) singular transformation
\begin{equation*}
(\hat{x},y,\hat{\epsilon},\rho)  \mapsto  (\rho\hat{x}, y, \rho\hat{\epsilon}) = (x_{2},y_{2},r_{2}),
\end{equation*}
where $(\hat x,\hat \epsilon)\in\mathbb S^1$, $\hat \epsilon\ge 0$ and $\rho\ge 0$. As usual, we will work with different charts near the blow-up locus $\rho=0$ (the phase directional charts $\hat{x} = \pm 1$ and the family chart $\hat{\epsilon} = 1$).

\begin{figure}[htp]
\begin{center}
\includegraphics[width=4in]{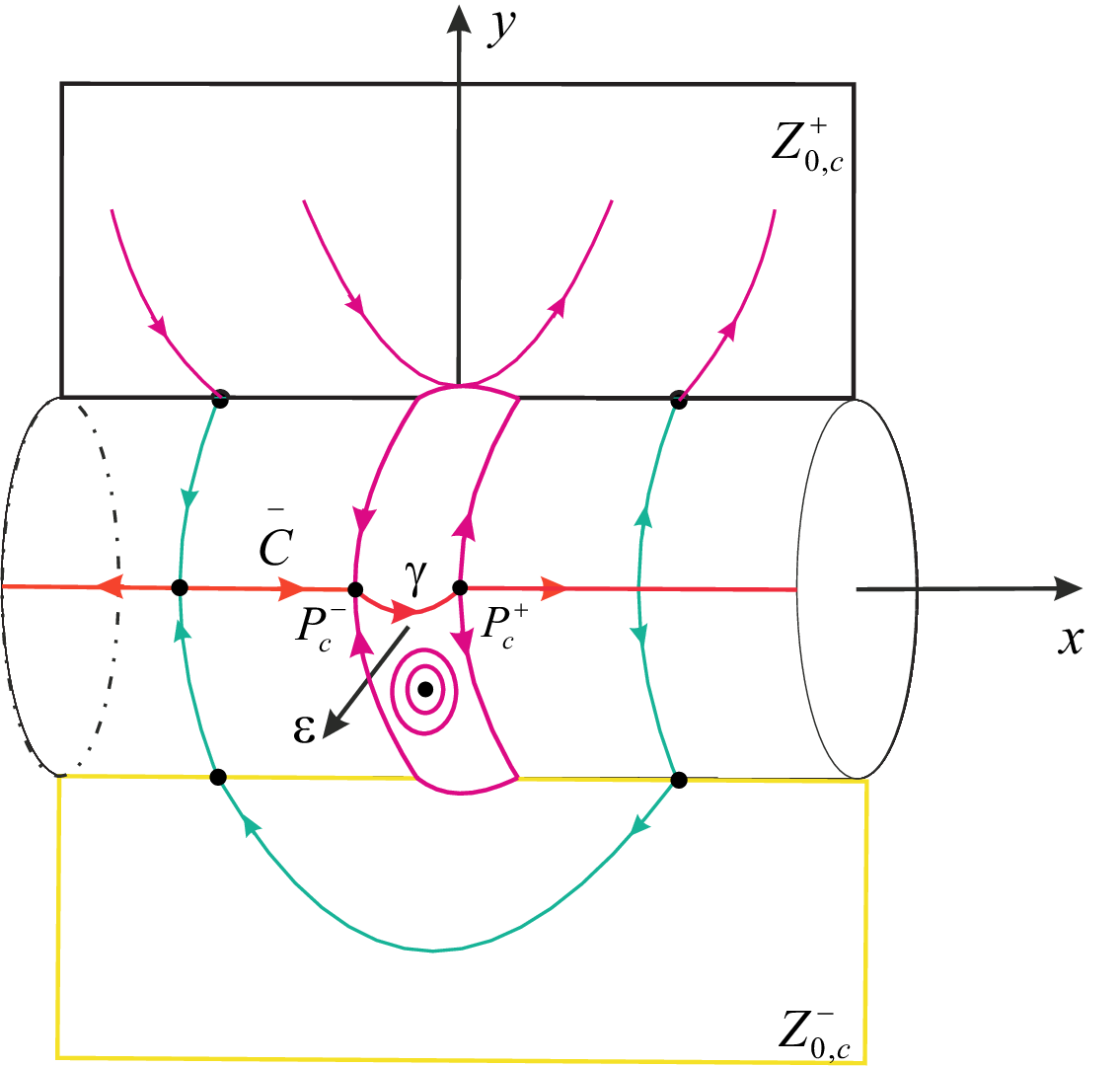}
\caption{\footnotesize{Blow-up of the degenerate line $\overline{H}$}}\label{FIG6}
\end{center}
\end{figure}

\subsection{Charts $\hat{x} = \pm 1$}

Let us first study the chart $\hat{x} = - 1$. The blow-up transformation takes the from $(x_{2}, r_{2}) = (-\rho_{1}, \rho_{1}\hat{\epsilon}_{1})$ and system \eqref{family} becomes (after division by $\rho_{1} > 0$)
%\begin{equation}\label{eq-app-chart-x1}
%\begin{array}{rcc}
%\dot{\rho}_1 & = & -\rho_{1}^{2}\hat{\epsilon}_{1}^{2}\left(X^{+}_{\rho_{1}\hat{\epsilon}_{1}\tilde{\lambda}, c}\left(-\rho_{1},\rho_{1}^{2}\hat{\epsilon}_{1}^{2}y_{2}\right)\phi(y_{2}) + X^{-}_{\rho_{1}\hat{\epsilon}_{1}\tilde{\lambda}, c}\left(-\rho_{1},\rho_{1}^{2}\hat{\epsilon}_{1}^{2}y_{2}\right)(1-\phi(y_{2}))\right), \\[3mm]
%\dot{y}_2 & = & Y^{+}_{\rho_{1}\hat{\epsilon}_{1}\tilde{\lambda}, c}\left(-\rho_{1},\rho_{1}^{2}\hat{\epsilon}_{1}^{2}y_{2}\right)\phi(y_{2}) + Y^{-}_{\rho_{1}\hat{\epsilon}_{1}\tilde{\lambda}, c}\left(-\rho_{1},\rho_{1}^{2}\hat{\epsilon}_{1}^{2}y_{2}\right)(1-\phi(y_{2})),  \\[3mm]
%\dot{\hat{\epsilon}}_1 & = & \rho_{1}\hat{\epsilon}_{1}^{3}\left(X^{+}_{\rho_{1}\hat{\epsilon}_{1}\tilde{\lambda}, c}\left(-\rho_{1},\rho_{1}^{2}\hat{\epsilon}_{1}^{2}y_{2}\right)\phi(y_{2}) + X^{-}_{\rho_{1}\hat{\epsilon}_{1}\tilde{\lambda}, c}\left(-\rho_{1},\rho_{1}^{2}\hat{\epsilon}_{1}^{2}y_{2}\right)(1-\phi(y_{2}))\right).
%\end{array}
%\end{equation}
%By Assumptions (A1) and (A2) (see also Equation \eqref{A1-1}), we can divide Equation \eqref{eq-app-chart-x1} by $\rho_{1} > 0$ and then we obtain
\begin{equation}\label{eq-app-chart-x1-after}
\begin{array}{rcccl}
\dot{\rho}_1 & = & -\rho_{1}\hat{\epsilon}_{1}^{2}\left(X^{+}_{0, c}\left(0,0\right)\phi(y_{2}) + X^{-}_{0, c}\left(0,0\right)(1-\phi(y_{2}))+O(\rho_{1})\right), \\[3mm]
\dot{y}_2 & = & -\left(\partial_{x}Y^{+}_{0, c}\left(0,0\right)\phi(y_{2}) + \partial_{x}Y^{-}_{0, c}\left(0,0\right)(1-\phi(y_{2}))\right) & + & O(\rho_{1},\hat{\epsilon}_{1}),  \\[3mm]
\dot{\hat{\epsilon}}_1 & = & \hat{\epsilon}_{1}^{3}\left(X^{+}_{0, c}\left(0,0\right)\phi(y_{2}) + X^{-}_{0, c}\left(0,0\right)(1-\phi(y_{2}))+O(\rho_{1})\right),
\end{array}
\end{equation}
where we use $Y_{0,c}^\pm(0,0)=0$ (see \eqref{A1-1}).
In the invariant line $\{\rho_{1} = \hat{\epsilon}_{1} = 0\}$, system \eqref{eq-app-chart-x1-after} has a normally hyperbolic singularity $P_{c}^{-} = (0,y_{2}^{*},0)$, with
$$y_{2}^{*} := \phi^{-1}\left(\frac{-\partial_{x} Y_{0,c}^{-}}{\partial_{x}Y_{0,c}^{+}-\partial_{x}Y_{0,c}^{-}}(0,0)\right).$$

The nonzero eigenvalue of $P_{c}^{-}$ is given by $- \phi'(y_{2}^{*})\left(\partial_{x}Y_{0,c}^{+}-\partial_{x}Y_{0,c}^{-}\right)(0,0) < 0$ (we use \eqref{A1-1} and $\phi'>0$), and center manifolds at $P_{c}^{-}$ are transverse to the stable manifold given by the $y_2$-axis. Observe that $P_{c}^{-}$ is the end point of the normally attracting portion of $\overline{C}$ located on the blow-up locus (see Figure \ref{FIG6}). This holds for every $(\tilde{\lambda},c)$ in a small neighborhood of $(0,c_0)$, and therefore Assumption $2.10$ of \cite{DD08} is fulfilled.

Now we check Assumption $2.11$. A center manifold of \eqref{eq-app-chart-x1-after} at $P_{c}^{-}$ is a graph in $(\rho_{1},\hat{\epsilon}_{1})$
$$y_{2} = y_{2}^{*} + O(\rho_{1},\hat{\epsilon}_{1}).$$

Replacing this last expression in the $(\rho_{1},\hat{\epsilon}_{1})$ component of \eqref{eq-app-chart-x1-after} yields to the center behavior
\begin{equation}\label{eq-app-center-bhv}
\dot{\rho}_{1} = -\rho_{1}\hat{\epsilon}_{1}^{2}\left(X^{sl}_{0,c}(0) + O(\rho_{1},\hat{\epsilon}_{1})\right), \quad \dot{\hat{\epsilon}}_{1} = \hat{\epsilon}_{1}^{3}\left(X^{sl}_{0,c}(0) + O(\rho_{1},\hat{\epsilon}_{1})\right).
\end{equation}

Recall from Assumption (A2) that $X^{sl}_{0,c}(0)$ is positive. Now, we divide \eqref{eq-app-center-bhv} by $\hat{\epsilon}_{1}^{2}$ and find an isolated hyperbolic
saddle at $(\rho_{1},\hat{\epsilon}_{1})=(0,0)$. This is true for all $(\tilde{\lambda},c)$ in a small neighborhood of $(0,c_0)$. Therefore, Assumption $2.11$ is checked. Note also that $P_{c}^{-}$ has a unique center manifold on the blow-up locus $\rho_1=0$ because \eqref{eq-app-chart-x1-after} is of saddle type inside $\rho_1=0$.

\begin{remark}
{\rm
The exponent in $\hat{\epsilon}_{1}^{2}$, sometimes called the order of degeneracy, is equal to the exponent in the term $r_{2}^{2}$ that we divide out when we define the slow dynamics of \eqref{family} along $\overline{C}$ (see the proof of Proposition \ref{prop-slow-sliding}).
}
\end{remark}

So far, we checked assumptions $2.10$ and $2.11$ in the chart $\hat{x} = -1$. The study of the chart $\hat{x} = 1$ is completely analogous (we can apply $(\rho_{1},\hat{\epsilon}_{1},t)\to(-\rho_{1},-\hat{\epsilon}_{1},-t)$ to \eqref{eq-app-chart-x1-after}). We denote the intersection between the repelling portion of $\overline{C}$ and the blow-up locus by $P_{c}^{+}$.

\subsection{Chart $\hat{\epsilon}$ = 1}

In this chart we have $x_{2} = \rho_{2}\hat{x}_{2}$ and $r_2=\rho_{2}$. System \eqref{family} changes into
\begin{equation}\label{eq-app-chart-e1}
\begin{array}{rcl}
\dot{\hat{x}}_2 & = & \rho_{2}\left(X^{+}_{\rho_2\tilde \lambda, c}(\rho_{2}\hat{x}_{2},\rho_{2}^{2}y_2)\phi(y_2) + X^{-}_{\rho_2\tilde \lambda, c}(\rho_{2}\hat{x}_{2},\rho_{2}^{2}y_2)\left(1-\phi(y_2)\right)\right), \\[3mm]
\dot{y}_2 & = & Y^{+}_{\rho_2\tilde\lambda, c}(\rho_{2}\hat{x}_{2},\rho_{2}^{2}y_2)\phi(y_2)+Y^{-}_{\rho_2\tilde\lambda, c}(\rho_{2}\hat{x}_{2},\rho_{2}^{2}y_2)\left(1-\phi(y_2)\right).
\end{array}
\end{equation}

In order to study the dynamics on the blow-up locus, we first divide the right-hand side of \eqref{eq-app-chart-e1} by $\rho_{2}$ and then set $\rho_{2} = 0$ and $\tilde \lambda=0$. Thus we obtain
\begin{equation}\label{eq-app-chart-e1-after}
\begin{array}{rcl}
\dot{\hat{x}}_2 & = & X^{+}_{0, c}(0,0)\phi(y_2) + X^{-}_{0, c}(0,0)\left(1-\phi(y_2)\right), \\[3mm]
\dot{y}_2 & = & \hat{x}_{2}\left(\partial_{x}Y^{+}_{0, c}(0,0)\phi(y_2)+\partial_{x}Y^{-}_{0, c}(0,0)\left(1-\phi(y_2)\right)\right).
\end{array}
\end{equation}

It can be easily checked that for each $c\sim c_0$, the horizontal line $\gamma = \{y_{2} = y_{2}^{*}\}$ is invariant by \eqref{eq-app-chart-e1-after}. The line $\gamma$ is regular (we have $\dot{\hat{x}}_2 = X^{sl}_{0,c}(0)>0$ on it) and connects $P_{c}^{-}$ to $P_{c}^{+}$. We refer to Figure \ref{FIG6}.  Therefore, Assumption $2.13$ of \cite{DD08} also holds.

\begin{remark}
{\rm
In \cite{HK23}, a planar slow-fast system similar to \eqref{family} satisfies the assumptions of \cite{DD05}. The main difference between our paper and \cite{HK23} lies in the fact that in \cite{HK23} the associated slow dynamics is regular and the invariant line $\gamma$ breaks in a regular way as we vary $\tilde\lambda$ (see the additional assumption in Remark \ref{rem-breaking}).
}
\end{remark}

\section*{Declarations}
\textbf{Conflict of interest} The authors declare that they have no conflict of interest.

\textbf{Data Availability Statement} Data sharing not applicable to this article as no datasets were generated or
analysed during the current study.

\end{document}